\documentclass[10pt,reqno]{amsart}
\usepackage[T1]{fontenc}
\usepackage[utf8]{inputenc}
\usepackage{lmodern}
\usepackage{float}
\usepackage[margin=1.2in]{geometry}
\usepackage{microtype}
\usepackage{mathtools,amssymb,amsthm,mathrsfs}
\usepackage{bm}
\usepackage{ytableau}
\usepackage{commath}
\usepackage{enumerate}
\usepackage{tikz}
\usetikzlibrary{shapes.geometric}
\usepackage[colorlinks=true,
linkcolor=blue!50!black,
citecolor=green!50!black,
urlcolor=blue!50!black]{hyperref}
\usepackage[nameinlink,noabbrev,capitalize]{cleveref}
\usepackage[
backend=biber,
url=false,
isbn=false,
doi=true,
backref=true,
style=numeric,
maxbibnames=99,
sorting=nyt,
giveninits=true,
]{biblatex}
\DefineBibliographyStrings{english}{
	backrefpage = {$\uparrow$},   
	backrefpages = {$\uparrow$}, 
}
\numberwithin{equation}{section}
\theoremstyle{plain}
\newtheorem{theorem}{Theorem}
\newtheorem*{theorem*}{Theorem}
\newtheorem{prop}{Proposition}
\newtheorem{lemma}{Lemma}

\newcommand{\thistheoremname}{}
\newtheorem{genericthm}{\thistheoremname}

\newtheorem*{genericthm*}{\thistheoremname}
\newenvironment{namedtheorem*}[1]{\renewcommand{\thistheoremname}{#1}\begin{genericthm*}}{\end{genericthm*}}

\newtheorem{maintheorem}{Theorem}

\crefname{maintheorem}{Theorem}{Theorems}

\theoremstyle{definition}

\newtheorem{example}{Example}

\Crefname{prop}{Proposition}{Propositions}

\allowdisplaybreaks

\DeclareMathOperator{\ad}{ad}
\DeclareMathOperator{\CT}{CT}
\newcommand{\hyper}[3][\Phi]{{}_{#2}#1_{#3}}
\let\L\relax\DeclareMathOperator{\L}{\mathcal{L}}
\DeclareMathOperator{\R}{\mathcal{R}}
\DeclareMathOperator{\M}{\mathcal{M}}
\DeclareMathOperator{\N}{\mathcal{N}}
\def\geq{\geqslant}
\def\leq{\leqslant}
\def\({\left(}
\def\){\right)}
\DeclareMathOperator{\cover}{:\!\supset}
\DeclareMathOperator{\coveredby}{\subset\!:}
\newcommand{\poch}[2]{\(#1\)_{{#2}}}
\let\=\relax\newcommand{\=}{\mathrel{\phantom{=}}}
\newcommand\Z{\mathbb{Z}}
\newcommand\Q{\mathbb{Q}}
\def\eps{\varepsilon}
\DeclareRobustCommand\qpartial[2][q]{{\partial\over\partial_{#1}#2}}
\DeclareMathOperator{\sgn}{sgn}

\newcommand*{\SetSuchThat}[1][]{} 

\newcommand*{\MvertSets}{%
	\renewcommand*\SetSuchThat[1][]{%
		\mathclose{}%
		\nonscript\;##1\vert\penalty\relpenalty\nonscript\;%
		\mathopen{}%
	}%
}
\MvertSets 
\DeclarePairedDelimiterX \Set [2] {\lbrace}{\rbrace}
{\,#1\SetSuchThat[\delimsize]#2\,}

\newcommand{\mydef}[1]{\textbf{#1}}

\title[Difference Equations for Macdonald Hypergeometric Series]{A Characterization of the Macdonald Hypergeometric Series ${}_r\Phi_s(x;q,t)$ and ${}_r\Phi_s(x,y;q,t)$ via $q$-Difference Equations}
\author{Hong Chen}
\address[H.~Chen]{Department of Mathematics, Rutgers University, NJ, US}
\email{hc813@math.rutgers.edu}
\date{\today}
\begin{document}
\begin{abstract}
	In two widely circulated manuscripts from the 1980s, I.~G.~Macdonald introduced certain multivariate hypergeometric series ${}_pF_q(x;\alpha)$ and ${}_pF_q(x,y;\alpha)$ and their $q$-analogs ${}_r\Phi_s(x;q,t)$ and ${}_r\Phi_s(x,y;q,t)$. These series are given by explicit expansions in Jack and Macdonald polynomials, and they generalize the hypergeometric functions of one and two matrix arguments from statistics.
	
	In a recent joint paper with Siddhartha Sahi, we constructed differential operators that characterize the Jack series ${}_pF_q$ thereby answering a question of Macdonald. In this paper we construct analogous $q$-difference operators that characterize the Macdonald series ${}_r\Phi_s$.
	
	More precisely, we construct three $q$-difference operators $\mathcal A^{(x,y)}$, $\mathcal B^{(x)}$, $C^{(x)}$. The equation	$\mathcal A^{(x,y)}(f(x,y))=0$ characterizes ${}_r\Phi_s(x,y;q,t)$, while the equations $\mathcal B^{(x)}(f(x))=0$ and $\mathcal C^{(x)}(f(x))=0$ each characterize ${}_r\Phi_s(x;q,t)$. These characterizations are subject to certain symmetry, boundary and stability condition.	
	In the special case of ${}_2\Phi_1(x;q,t)$, our operator $\mathcal B^{(x)}$ was previously constructed by Kaneko in 1996. 
\end{abstract}
\keywords{Macdonald polynomials, hypergeometric functions, difference equations}
\subjclass{
	05E05,	
	33C67,	
	39A13
}
\maketitle
\section{Introduction}\label{sec:intro}
	Hypergeometric functions serve as a unifying framework for many special functions in mathematics and physics. 
	They appear naturally as solutions to differential or difference equations and encode rich combinatorial structures.
\subsection{The univariate cases}
	\subsubsection{Hypergeometric functions}
	Hypergeometric functions were first studied by Euler in 1769, as a tool in studying the following second-order linear ODE:
	\begin{align}\label{eqn:euler}
		z(1-z)\dod[2]{F(z)}{z} +(c-(a+b+1)z)\dod{F(z)}{z} -abF(z)=0.
	\end{align}
	Later, Gauss (1812) gave the first full systematic treatment to Euler's solution of the equation, now called the \mydef{Gauss hypergeometric series}.
	
	The theory has been extended to hypergeometric series with $p$ numerator parameters and $q$ denominator parameters:
	\begin{align}\label{eqn:pfq}
		\hyper[f]{p}{q}(a_1,\dots,a_p;b_1,\dots,b_q;z) = \sum_{k=0}^\infty \frac{\poch{a_1}{k}\cdots \poch{a_p}{k}}{\poch{b_1}{k}\cdots \poch{b_q}{k}} \frac{z^k}{k!},
	\end{align}
	where $\poch{a}{k}=a(a+1)\cdots (a+k-1)$ is the \mydef{Pochhammer symbol}.
	(Our series $\hyper[f]{p}{q}$ is usually denoted by $\hyper[F]{p}{q}$ in the literature.)
	The Gauss hypergeometric series is $\hyper[f]{2}{1}(a,b;c;z)$.
	
	Generalizing Euler's \cref{eqn:euler}, the hypergeometric series $\hyper[f]{p}{q}$ is the unique power series solution of the following differential equation
	\begin{align}\label{eqn:pfq-de}
		\(z\dod{}{z}\prod_{k=1}^q \(z\dod{}{z}+b_k-1\)
		-z \prod_{k=1}^p \(z\dod{}{z}+a_k\)\)(F(z))=0,	\quad F(0)=1.
	\end{align}
	See \cite{Erd81,AAR99,AskeyBateman}.
	
	\subsubsection{Basic hypergeometric series}
	In 1846, Heine introduced a $q$-analog of the Gauss hypergeometric series, which was also later generalized to admit more parameters.
	The modern definition of the \mydef{basic hypergeometric series} is 
	\begin{align}\label{eqn:basic}
		\hyper[\phi]{r}{s}(a_1,\dots,a_r;b_1,\dots,b_s;z;q) = \sum_{k=0}^\infty \((-1)^kq^{\binom{k}{2}}\)^{s+1-r} \frac{\poch{a_1;q}{k}\cdots\poch{a_r;q}{k}}{\poch{b_1;q}{k}\cdots\poch{b_s;q}{k}} \frac{z^k}{\poch{q;q}{k}},
	\end{align}
	where $\poch{a;q}{k}=(1-a)(1-aq)\cdots(1-aq^{k-1})$ is the \mydef{$q$-Pochhammer symbol}.
	
	Since $\frac{1-q^a}{1-q}\to a$ as $q\to1$, we have $\frac{\poch{q^a;q}{k}}{(1-q)^k}\to\poch{a}{k}$, and thus
	\begin{align}
		\lim_{q\to1}\hyper[\phi]{r}{s}(q^{a_1},\dots,q^{a_r};q^{b_1},\dots,q^{b_s};(q-1)^{s+1-r}z;q) = \hyper[f]{r}{s}(a_1,\dots,a_r;b_1,\dots,b_s;z). 
	\end{align}
	
	The series $\hyper[\phi]{r}{s}(z)=\hyper[\phi]{r}{s}(a_1,\dots,a_r;b_1,\dots,b_s;z;q)$ satisfies the following $q$-difference equation:
	\begin{align}\label{eqn:q-difference}
		\Delta_1\Delta_{b_1/q}\cdots \Delta_{b_s/q}(F(z)) =z\Delta_{a_1}\cdots\Delta_{a_r}(F(q^{s+1-r}z)),
	\end{align}
	where $\Delta_aF(z)=aF(qz)-F(z)$. 
	This equation is the $q$-analog of \cref{eqn:pfq-de}; see \cite[Exercise~1.31]{GR04}. 
	
	For a very brief historical account of hypergeometric series and differential equations, we refer to the introduction of \cite{CS-Jack}. 
	For a more comprehensive introduction, see \cite{AAR99,GR04,AskeyBateman}.
\subsection{The multivariate cases}
	In the study of multivariate statistics,  Herz \cite{Herz} and Constantine \cite{Con63} introduced multivariate analogs of hypergeometric series that are associated with zonal polynomials.
	Following them, Macdonald introduced \mydef{Jack hypergeometric series} $\hyper[F]{p}{q}$ associated with Jack polynomial and \mydef{Macdonald hypergeometric series} $\hyper{r}{s}$ associated with Macdonald polynomials in his manuscripts \cite{Mac-HG,Mac-HG-2}.
	
	In what follows, fix $n\geq1$ and write $x=(x_1,\dots,x_n)$, $y=(y_1,\dots,y_n)$, $\bm1_n=(1,\dots,1)$, $\bm0_n=(0,\dots,0)$ ($n$ times), and $t^{\bm\delta_n}=(t^{n-1},t^{n-2},\dots,1)$.
	\subsubsection{Jack hypergeometric series}
	Macdonald defined the following Jack hypergeometric series in one and two alphabets:
	\begin{align}
		\hyper[F]{p}{q}(a_1,\dots,a_p;b_1,\dots,b_q;x;\alpha)	
		&=	\sum_\lambda \frac{\poch{a_1;\alpha}{\lambda}\cdots\poch{a_p;\alpha}{\lambda}}{\poch{b_1;\alpha}{\lambda}\dots\poch{b_q;\alpha}{\lambda}} \alpha^{|\lambda|} \frac{J_\lambda(x;\alpha)}{j_\lambda^{(\alpha)}},	\label{eqn:pFq-Jack-1}\\ 
		\hyper[F]{p}{q}(a_1,\dots,a_p;b_1,\dots,b_q;x,y;\alpha)	
		&=	\sum_\lambda \frac{\poch{a_1;\alpha}{\lambda}\cdots\poch{a_p;\alpha}{\lambda}}{\poch{b_1;\alpha}{\lambda}\dots\poch{b_q;\alpha}{\lambda}} \alpha^{|\lambda|} \frac{J_\lambda(x;\alpha)J_\lambda(y;\alpha)}{j_\lambda^{(\alpha)} J_\lambda(\bm1_n;\alpha)},\label{eqn:pFq-Jack-2}
	\end{align}
	where the sums run over all partitions of length at most $n$, $\poch{\cdot;\alpha}{\lambda}$ is the \mydef{$\alpha$-Pochhammer symbol}, and $J_\lambda(\cdot;\alpha)$ is the Jack polynomial.
	See \cref{sec:jack}.
	
	We shall often drop parameters and write $\hyper{p}{q}(x) = \hyper[F]{p}{q}(a_1,\dots,a_p;b_1,\dots,b_q;x;\alpha)$ and similarly for $\hyper{p}{q}(x,y)$.
	When $n=1$, the series $\hyper[F]{p}{q}$ in \cref{eqn:pFq-Jack-1} reduces to the univariate series $\hyper[f]{p}{q}$ in \cref{eqn:pfq}. 
	As generalizations of \cref{eqn:pfq-de}, differential equations for $\hyper[F]{p}{q}(x)$ and $\hyper[F]{p}{q}(x,y)$ have been studied in the zonal case  \cite{M70,CM72,Fujikoshi} and in the Jack case \cite{Mac-HG,Yan92,Kan93,BF97}, for small $p$ and $q$.
	In a recent work \cite{CS-Jack}, together with Siddhartha Sahi, we solved this question: 
	
	We found the following differential operators:
	\begin{align*}
		\L^{(\alpha)}, \quad
		\M^{(\alpha)}, \quad
		\N^{(\alpha)}, \quad
		\R^{(\alpha)}.
	\end{align*}
	The operator $\L^{(\alpha)}$ \textit{lowers} the degree of Jack polynomials by one, $\R$ \textit{raises} it by one, while $\M$ and $\N$ are \textit{eigen-operators}, acting diagonally on Jack polynomials $(J_\lambda(\alpha))$.
	
	We proved that the series $\hyper[F]{p}{q}(x,y)$ is the unique solution of the equation
	\begin{align}\label{eqn:intro-1}	
		\(\L^{(\alpha),(x)}-\R^{(\alpha),(y)}\)(F(x,y))=0,\quad F(\bm0_n,\bm0_n)=1,
	\end{align}
	where the superscripts $x$ and $y$ indicate differentiation with respect to the corresponding alphabet; the series $\hyper{p}{q}(x)$ is the unique solution of the equation 
	\begin{align}\label{eqn:intro-2}
		\(\L^{(\alpha),(x)}-\M^{(\alpha),(x)}\)(F(x))=0,	\quad F(\bm0_n)=1,
	\end{align}
	subject to certain stability condition;
	and the series $\hyper{p}{q}(x)$ is the unique solution of the equation
	\begin{align}\label{eqn:intro-3}
		\(\N^{(\alpha),(x)}-\R^{(\alpha),(x)}\)(F(x))=0,	\quad F(\bm0_n)=1.
	\end{align}
	Here, uniqueness is under the assumption that $F(x,y)$ and $F(x)$ are expressible as certain formal power series.
	
	\subsubsection{Macdonald polynomials}
	Macdonald introduced the following Macdonald hypergeometric series:
	\begin{align}
		\hyper{r}{s}(a_1,\dots,a_r;b_1,\dots,b_s;x;q,t) &= \sum_{\lambda} \frac{\poch{a_1;q,t}{\lambda}\cdots\poch{a_r;q,t}{\lambda}}{\poch{b_1;q,t}{\lambda}\dots\poch{b_s;q,t}{\lambda}} t^{n(\lambda)} \frac{J_\lambda(x;q,t)}{j_\lambda^{(q,t)}}	\label{eqn:pFq-Mac-1}\\
		\hyper{r}{s}(a_1,\dots,a_r;b_1,\dots,b_s;x,y;q,t) &= \sum_{\lambda} \frac{\poch{a_1;q,t}{\lambda}\cdots\poch{a_r;q,t}{\lambda}}{\poch{b_1;q,t}{\lambda}\dots\poch{b_s;q,t}{\lambda}} t^{n(\lambda)} \frac{J_\lambda(x;q,t)J_\lambda(y;q,t)}{j_\lambda^{(q,t)} J_\lambda(t^{\bm\delta_n};q,t)},\label{eqn:pFq-Mac-2}
	\end{align}
	where the sums run over partitions of length at most $n$, and $\poch{\cdot;q,t}{\lambda}$ is the \mydef{$(q,t)$-Pochhammer symbol} (defined below), $J_\lambda(\cdot;q,t)$ is the Macdonald polynomial.
	
	The basic hypergeometric series $\hyper[\phi]{r}{s}(z;q)$ and the Jack hypergeometric series $\hyper[F]{p}{q}(x;\alpha)$ generalize the classical hypergeometric series $\hyper[f]{p}{q}(z)$ in two completely different directions: the former concerns $q$-Pochhammer symbol and a single variable, while the latter $\alpha$-Pochhammer symbol and $n$-variate Jack polynomials.
	The Macdonald hypergeometric series $\hyper{r}{s}(x;q,t)$ then \textit{unify} the two generalizations: the $(q,t)$-Pochhammer symbol reduces to the $q$- and the $\alpha$-Pochhammer symbols and Macdonald polynomial to a single variable and Jack polynomial.
	For instance, for $n=1$, writing $x_1$ as $z$, we have 
	\begin{align*}
		\hyper{2}{1}(a,b;c;x;q,t) = \hyper[\phi]{2}{1}(a,b;c;z;q).
	\end{align*}
	As for the Jack hypergeometric series, we have 
	\begin{align*}
		\lim_{q\to1} \hyper{2}{1}(q^a,q^b;q^c;x;q,q^{1/\alpha})
		= \hyper[F]{2}{1}(a,b;c;x;\alpha).
	\end{align*}
	
	A natural question is to find $q$-difference equations that characterize the Macdonald hypergeometric series $\hyper{r}{s}(x)$ and $\hyper{r}{s}(x,y)$. In this paper, we answer this question, generalizing the results of \cite{CS-Jack}.
	
	For other important properties of the Macdonald hypergeometric series $\hyper{r}{s}$---including, but not limited to, summation formulas, transformation identities, $q$-Selberg integrals, and integral representations---we refer the reader to the aforementioned papers, as well as \cite{Kan96,BF99,War05,FW08} and the references therein.
\subsection{Our work}
	In this paper, we generalize the ideas in the Jack case \cite{CS-Jack} to the Macdonald settings, and prove analogous results.
	To be more precise, we find the following $q$-difference operators: 
	\begin{align*}
		\L=\L^{(q,t)},\quad
		\M=\M^{(q,t)},\quad
		\N=\N^{(q,t)},\quad
		\R=\R^{(q,t)}.
	\end{align*}
	The operator $\L$ \textit{lowers} the degree of Macdonald polynomials by one, $\R$ \textit{raises} it by one, while $\M$ and $\N$ are \textit{eigen-operators}, acting diagonally on Macdonald polynomials $(J_\lambda(q,t))$.
	
	We show in \cref{thm:2arg,thm:1arg-lowering,thm:1arg-raising} that the Macdonald hypergeometric series $\hyper{r}{s}(x,y)$ and $\hyper{r}{s}(x)$ are uniquely characterized by equations similar to \cref{eqn:intro-1,eqn:intro-2,eqn:intro-3}, with $(\alpha)$ replaced by $(q,t)$.
\subsection{Related results}
	We recall some results from Kaneko's paper \cite{Kan96}.
	See \cref{sec:kaneko} for a detailed discussion.
	
	In \cite{Kan96}, Kaneko independently defined and studied the series $\hyper[\widetilde\Phi]{r}{s}(x)$ and $\hyper[\widetilde\Phi]{r}{s}(x,y)$, whose definition involves an extra factor compared to Macdonald's \cref{eqn:pFq-Mac-1,eqn:pFq-Mac-2}, and coincides with Macdonald's when $r+1=s$. 
	Kaneko's main result \cite[Theorem 4.10]{Kan96} established that the series $\hyper{2}{1}(a,b;c;x;q,t)$ is the unique solution of the difference equation \cite[Eq.~(4.2)]{Kan96}.
	One can verify (see \cref{sec:kaneko}) that Kaneko's difference operator is proportional to our operator in \cref{thm:1arg-lowering}, hence we generalize Kaneko's results to arbitrary $r$ and $s$.
	
	Using the difference equation, Kaneko proved a $q$-Selberg integral formula \cite[Theorem~5.1]{Kan96}. 
	In addition, Kaneko \cite[Proposition~5.3]{Kan96} found an integral representation of $\hyper{r+1}{s+1}(x)$ in terms of $\hyper{r}{s}(x,y)$, generalizing the Jack analog proved by Yan \cite[Proposition~3.3]{Yan92}. A summation formula for $\hyper{2}{1}$ was also given in \cite[Proposition~5.4]{Kan96}.
	
	This work builds upon the ideas introduced in \cite{CS-Jack}, where this paper was announced as forthcoming. 
	During the preparation of this manuscript---which was circulated privately in October 2025 and forms part of the author's PhD thesis \cite{Chen-thesis}---we became aware of the updated preprint \cite{LWYZ-Mac} (December 2025). 
	While their work also employs the techniques of \cite{CS-Jack} to obtain overlapping results, the findings presented here were derived independently and prior to their release.
	
\subsection{Organization}
The paper is organized as follows. 
\cref{sec:pre} recalls preliminaries on partitions, Macdonald polynomials, operators, and binomial coefficients. 
\cref{sec:2arg,sec:1arg} develop the theory for two-alphabet and one-alphabet Macdonald hypergeometric series, respectively. 
Finally, \cref{sec:related} discusses connections with the Jack hypergeometric series~\cite{CS-Jack}, Kaneko's hypergeometric series~\cite{Kan96}, and the classical univariate case~\cite{GR04}.

\section{Preliminaries}\label{sec:pre}
	We shall follow most notation in \cite{CS-Jack}. 
	Throughout let $n\geq1$ be the number of variables and let $x=(x_1,\dots,x_n)$ and $y=(y_1,\dots,y_n)$. 
	We denote $\bm1_n=(1,\dots,1)$, $\bm0_n=(0,\dots,0)$ ($n$ times), 
	$\bm\delta_n=(n-1,n-2,\dots,0)$ and $t^{\bm\delta_n}=(t^{n-1},t^{n-2},\dots,1)$. 
\subsection{Partitions}
	A \mydef{partition}, of length at most $n$, is an $n$-tuple $\lambda=(\lambda_1,\dots,\lambda_n)$ of integers such that $\lambda_1\geq\cdots\geq\lambda_n\geq0$.	
	The \mydef{size} is $|\lambda|\coloneqq\lambda_1+\dots+\lambda_n$ and the \mydef{length} is $\ell(\lambda)\coloneqq\max\Set{i}{\lambda_i>0}$.
	We shall identify a partition with its \mydef{Young diagram}, namely, the set $\Set{(i,j)\in\Z^2}{1\leq j\leq \lambda_i,\,1\leq i\leq n}$.
	We denote by $(0)$ the zero partition.
	Denote by $\mathcal P_n$ the set of partitions of length at most $n$.
	
	The \mydef{conjugate} of $\lambda$, denoted by $\lambda'$, is the partition (possibly having length more than $n$) whose diagram is the transpose of the diagram of $\lambda$.
	Equivalently, we have $\lambda_j'=\#\Set{i}{\lambda_i\geq j}$ for $j\geq1$.
	
	The \mydef{arm}, \mydef{co-arm}, \mydef{leg}, and \mydef{co-leg} of $(i,j)\in\lambda$ is
	\begin{align}
		a_\lambda(i,j) \coloneqq \lambda_i-j,\quad a'_\lambda(i,j) \coloneqq j-1,\quad
		l_\lambda(i,j) \coloneqq \lambda_j'-i,\quad l_\lambda'(i,j) \coloneqq i-1.
	\end{align}
	
	We need the following partial orders on $\mathcal P_n$: 
	we say $\lambda$ \mydef{contains} $\mu$, and write $\lambda\supseteq\mu$, if $\lambda_i\geq\mu_i$ for $1\leq i\leq n$;
	$\lambda$ \mydef{covers} $\mu$, $\lambda\cover\mu$, if $\lambda\supseteq\mu$ and $|\lambda|=|\mu|+1$; 
	$\lambda$ \mydef{dominates} $\mu$, $\lambda\geq\mu$, if $|\lambda|=|\mu|$ and $\lambda_1+\dots+\lambda_i\geq\mu_1+\dots+\mu_i$ for $1\leq i\leq n$.

	Recall that the usual \mydef{$q$-Pochhammer symbol} is defined as
	\begin{align}
		\poch{u;q}{m} =\prod_{j=1}^{m} (1-uq^{j-1})= (1-u)(1-uq)\cdots (1-uq^{m-1}).	
	\end{align} 
	Define the \mydef{$(q,t)$-Pochhammer symbol} as
	\begin{align}
		\poch{u;q,t}{\lambda} \coloneqq \prod_{(i,j)\in\lambda} (1-uq^{j-1}t^{1-i})
		=\prod_{i=1}^n \poch{ut^{1-i};q}{\lambda_i}.
	\end{align}
	When $\lambda=(m)$ is a row  partition, $\poch{u;q,t}{\lambda}=\poch{u;q}{m}$.
	From now on, write $\poch{u}{\lambda}=\poch{u;q,t}{\lambda}$.
	For a tuple $\underline{a}=(a_1,\dots,a_r)$, let
	\begin{align}
		\poch{\underline{a}}{\lambda}=\poch{\underline{a};q,t}{\lambda}=\poch{a_1;q,t}{\lambda}\cdots \poch{a_r;q,t}{\lambda}=(a_1)_\lambda\cdots(a_r)_\lambda.
	\end{align}
	
	Consider the following statistics
	\begin{align}
		n(\lambda) \coloneqq \sum_{(i,j)\in\lambda}(i-1),	\quad
		\rho(\lambda) \coloneqq \sum_{(i,j)\in\lambda} q^{j-1}t^{1-i}.
	\end{align}
	When $\lambda\cover\mu$, write
	\begin{align}
		n(\lambda/\mu) \coloneqq n(\lambda)-n(\mu),\quad
		\rho(\lambda/\mu) \coloneqq \rho(\lambda)-\rho(\mu),
	\end{align}
	then one easily sees the following identity that will be crucial later:
	\begin{align}\label{eqn:poch-ratio}
		\frac{\poch{u}{\lambda}}{\poch{u}{\mu}} =1-u\rho(\lambda/\mu),	\quad\lambda\cover\mu.
	\end{align}
\subsection{Symmetric polynomials}
	Let $\Lambda_{n,\Q}$ be the algebra of symmetric polynomials in the variables $x=(x_1,\dots,x_n)$ over $\Q$.
	The monomial $m_\lambda$, elementary $e_\lambda$, power sum $p_\lambda$, Schur $s_\lambda$ are well-known symmetric polynomials indexed by partitions. See \cite{Mac15,CS-Jack}.
	
	Let $\Lambda_{n,\Q(q,t)}=\Lambda_{n,\Q}\otimes\Q(q,t)$, where $q$ and $t$ are indeterminates over $\Q$.
	Define the \mydef{$(q,t)$-Hall inner product} on $\Lambda_{n,\Q(q,t)}$ by 
	\begin{align}
		\langle p_\lambda,p_\mu\rangle _{q,t} \coloneqq \delta_{\lambda\mu} z_\lambda \prod_{i=1}^{\ell(\lambda)} \frac{1-q^{\lambda_i}}{1-t^{\lambda_i}},
	\end{align}
	where $z_\lambda\coloneqq\prod_r (r^{m_r}m_r!)$ and $m_r\coloneqq \#\Set{i}{\lambda_i=r}$ is the multiplicity of $r$ in $\lambda$.
	
	It is well-known in \cite[Chapter~VI]{Mac15} that (the monic form) \mydef{Macdonald polynomials} $P_\lambda(q,t)$ are uniquely determined by orthogonality, triangularity and normalization:
	\begin{gather}
		\langle P_\lambda(q,t),P_\mu(q,t)\rangle _{q,t} = 0,\quad	\lambda\neq\mu,	\\
		P_\lambda(q,t)=\sum_{\mu\leq\lambda}\widetilde K_{\lambda\mu}(q,t)m_\mu,	\\
		\widetilde K_{\lambda,\lambda}(q,t)=1.
	\end{gather}
	
	The expansion coefficients $\widetilde K_{\lambda\mu}$ are generalizations of the Kostka numbers $K_{\lambda\mu}$ for Schur polynomials.
	Haglund--Haiman--Loehr \cite{HHL05} gave combinatorial interpretations for $\widetilde K_{\lambda\mu}(q,t)$.
	
	Macdonald polynomials specialize to many families of symmetric polynomials. 
	On the boundary of the unit square, $q=0$, $t=0$, $q=1$ and $t=1$, Macdonald polynomials reduce to Hall--Littlewood polynomials, $q$-Whittaker polynomials, elementary and monomial symmetric polynomials, respectively.
	On the diagonal $q=t$, Macdonald polynomials become Schur polynomials.
	In addition, the Macdonald polynomial $J_\lambda(q,t)$ reduces to the Jack polynomial $J_\lambda(\alpha)$ via the limit $t=q^{1/\alpha}$, $q\to1$; see \cref{sec:jack}.
	
	Macdonald polynomials are \mydef{stable} in the sense that $m$-variate ($m\leq n$) Macdonald polynomials are specialization of $n$-variate ones:
	\begin{align}\label{eqn:mac-stable}
		P_\lambda(x_1,\dots,x_m;q,t) = P_\lambda(x_1,\dots,x_m,0,\dots,0;q,t),\quad \ell(\lambda)\leq m. 
	\end{align}
	
	Define the following \mydef{$(q,t)$-hook-length} functions:
	\begin{align}
		c_\lambda(i,j) &\coloneqq 1-q^{a_\lambda(i,j)}t^{l_\lambda(i,j)+1}=1-q^{\lambda_i-j}t^{\lambda_j'-i+1},
		\\c_\lambda'(i,j) &\coloneqq 1-q^{a_\lambda(i,j)+1}t^{l_\lambda(i,j)}= 1- q^{\lambda_i-j+1}t^{\lambda_j'-i},
	\end{align}
	and 
	\begin{align}
		c_\lambda\coloneqq \prod_{(i,j)\in\lambda} c_\lambda(i,j), \quad	
		c_\lambda'\coloneqq \prod_{(i,j)\in\lambda} c_\lambda'(i,j),\quad
		j_\lambda\coloneqq c_\lambda c_\lambda'.
	\end{align} 
	Define the \mydef{integral form} and the \mydef{dual form} Macdonald polynomials by
	\begin{align}
		J_\lambda \coloneqq c_\lambda P_\lambda,{\quad and \quad}
		J_\lambda^* \coloneqq \frac{P_\lambda}{c_\lambda'} = \frac{J_\lambda}{j_\lambda }.
	\end{align}	
	These satisfy $\langle J_\lambda,J_\mu^*\rangle _{q,t}=\delta_{\lambda\mu}$. 
	
	Let $\eps_{u,t}$ be the homomorphism $\Lambda_{n,\Q(q,t)}\to\Q(q,t)$ given by
	\begin{align}
		\eps_{u,t}(p_r) = \frac{1-u^r}{1-t^r}, \quad 1\leq r\leq n.
	\end{align}
	In particular, for $f\in\Lambda_{n,\Q(q,t)}$, $\eps_{t^n,t}(f)=f(t^{n-1},t^{n-2},\dots,1)=f(t^{\bm\delta_n})$.
	By \cite[(VI.8.8)]{Mac15}
	\begin{align}
		\eps_{t^n,t}(J_\lambda) = \prod_{(i,j)\in\lambda} (t^{i-1}-t^nq^{j-1}) = t^{n(\lambda)}\poch{t^n;q,t}{\lambda}.
	\end{align}
	The \mydef{unital form} Macdonald polynomial is defined by
	\begin{align}
		\Omega_\lambda(x;q,t) \coloneqq \frac{J_\lambda(x;q,t)}{J_\lambda(t^{\bm\delta_n};q,t)}.
	\end{align}
	Note that it depends on $n$.
\subsection{Macdonald hypergeometric series}
	We now define the $n$-variate \mydef{Macdonald hypergeometric series}:
	\begin{align}
		\hyper{r}{s}(x) = \hyper{r}{s}^{(n)}(\underline{a};\underline{b};x;q,t) &= \sum_{\lambda\in\mathcal P_n}\frac{\poch{\underline{a};q,t}{\lambda}}{\poch{\underline{b};q,t}{\lambda}} t^{n(\lambda)} J_\lambda^*(x;q,t)	\label{eqn:pFq-Mac-1-new}\\
		\hyper{r}{s}(x,y)= \hyper{r}{s}^{(n)}(\underline{a};\underline{b};x,y;q,t) &= \sum_{\lambda\in\mathcal P_n}\frac{\poch{\underline{a};q,t}{\lambda}}{\poch{\underline{b};q,t}{\lambda}} t^{n(\lambda)} \Omega_\lambda(x;q,t) J_\lambda^*(y;q,t).
		\label{eqn:pFq-Mac-2-new}
	\end{align}
	Here, $r$ and $s\geq0$ are arbitrary, and $\underline{a}=(a_1,\dots,a_r)$ and $\underline{b}=(b_1,\dots,b_s)$ are indeterminates.
	We view \cref{eqn:pFq-Mac-1-new,eqn:pFq-Mac-2-new} as formal power series. 
	We often omit some parameters when there is no confusion.
	
	By stability of Macdonald polynomials \cref{eqn:mac-stable}, Macdonald hypergeometric series are also stable:
	\begin{align}
		\hyper{r}{s}^{(m)}(\underline{a};\underline{b};x_1,\dots,x_m;q,t)
		=\hyper{r}{s}^{(n)}(\underline{a};\underline{b};x_1,\dots,x_m,0\dots,0;q,t).
	\end{align}
	Unless otherwise stated, we only work with the $n$-variate case and the superscript $n$ will be omitted.
	
	It follows directly that
	\begin{align}
		\hyper{r}{s}(\underline{a};\underline{b};x,y;q,t)=\hyper{r}{s}(\underline{a};\underline{b};y,x;q,t),	\label{eqn:sym-xy}\quad
		\hyper{r}{s}(\underline{a};\underline{b};x,t^{\bm\delta_n};q,t)=	\hyper{r}{s}(\underline{a};\underline{b};x;q,t).
	\end{align}
	
	Macdonald \cite{Mac-HG-2} proved that 
	\begin{align*}
		\hyper{0}{0}(x;q,t) = \prod_{i=1}^n \frac{1}{\poch{x_i;q}{\infty}},	\quad
		\hyper{1}{0}(a;x;q,t) = \prod_{i=1}^n \frac{\poch{ax_i;q}{\infty}}{\poch{x_i;q}{\infty}}.
	\end{align*}
	Also, since $\poch{0;q,t}{\lambda}=1$ for all partitions,  
	\begin{align}\label{eqn:0para}
		\hyper{r+1}{s}(\underline{a},0;\underline{b};x;q,t) = 
		\hyper{r}{s+1}(\underline{a};\underline{b},0;x;q,t) = 
		\hyper{r}{s}(\underline{a};\underline{b};x;q,t).
	\end{align}
	The Cauchy identity for Macdonald polynomials reads
	\begin{align}
		\hyper{1}{0}(t^n;x,y;q,t)
		=\sum_{\lambda\in\mathcal P_n} \frac{J_\lambda(x)J_\lambda(y)}{j_\lambda} = \prod_{i,j=1}^n \frac{\poch{tx_iy_j;q}{\infty}}{\poch{x_iy_j;q}{\infty}}
	\end{align}
\subsection{Operators and binomial coefficients}
	Let
	\begin{align}
		e_1'(x)=\frac{e_1(x)}{1-q} = \frac{x_1+\dots+x_n}{1-q}.
	\end{align}
	For $1\leq i\leq n$, let
	\begin{align}
		A_i(x;t) \coloneqq \prod_{j\neq i} \frac{tx_i-x_j}{x_i-x_j}.
	\end{align}
	Define the $i$-th \mydef{$q$-shift operator} $T_{q,i}=T_{q,x_i}$ and \mydef{$q$-derivative operator} $\qpartial{x_i}$ on any function $f(x_1,\dots,x_n)$ as follows:
	\begin{align}
		(T_{q,i} f)(\dots,x_i,\dots) &\coloneqq f(\dots,qx_i,\dots),\\
		\qpartial{x_i}(f) &\coloneqq \frac{1}{x_i}\frac{T_{q,i}(f)-f}{q-1},
	\end{align}
	here, ``$\dots$'' means the remaining variables are unchanged.

	Define the operators $E$ and $\square$ by
	\begin{align}
		E &\coloneqq \sum_{i=1}^n A_i(x;t)\qpartial{x_i},	\\
		\square &\coloneqq \frac{1}{t^{n-1}}\sum_{i=1}^n x_iA_i(x;t)\qpartial{x_i}.
	\end{align}
	These two operators were studied by Lassalle in \cite{Las98}. 
	(Note: our operator $E$ is Lassalle's $E_0$ and our $\square$ is Lassalle's $\frac{1}{t^{n-1}}E_1$.)
	We collect some properties below.
	\begin{lemma}[{\cite[Proposition~7]{Las98}}]
		The operators $E$ and $\square$ can be written as
		\begin{align*}
			E &= \frac{1}{q-1} \sum_{i=1}^\infty \frac{A_i(x;t)T_{q,i}-1}{x_i},	\\
			\square &=  \frac{1}{q-1}\frac{1}{t^{n-1}} \(\sum_{i=1}^\infty  A_i(x;t)T_{q,i}-\frac{1-t^n}{1-t}\).
		\end{align*}
	\end{lemma}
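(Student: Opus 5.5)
Both formulas will follow by expanding the definitions and then invoking one classical identity: the sum of the coefficients $A_i(x;t)$ is a constant. Note that the sums as printed run to $\infty$; in the $n$-variable setting this means $\sum_{i=1}^n$, and I will work with $i\le n$ throughout.

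For $E$, substitute the definition of the $q$-derivative:
\begin{align*}
A_i(x;t)\qpartial{x_i}(f)=\frac{A_i(x;t)}{(q-1)x_i}\bigl(T_{q,i}f-f\bigr)=\frac{1}{q-1}\,\frac{A_i(x;t)T_{q,i}f-A_i(x;t)f}{x_i}.
\end{align*}
Summing over $i$ gives
\begin{align*}
Ef=\frac{1}{q-1}\sum_{i=1}^n\frac{A_i(x;t)T_{q,i}f-f}{x_i}\;-\;\frac{f}{q-1}\sum_{i=1}^n\frac{A_i(x;t)-1}{x_i}.
\end{align*}
So the claimed formula for $E$ is equivalent to the rational function identity
\begin{align*}
\sum_{i=1}^n\frac{A_i(x;t)-1}{x_i}=0.
\end{align*}

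For $\square$ the same computation, with the factor $x_i$ cancelling the $1/x_i$, gives
\begin{align*}
t^{n-1}\square f=\frac{1}{q-1}\Bigl(\sum_i A_i(x;t)T_{q,i}f-\Bigl(\sum_i A_i(x;t)\Bigr)f\Bigr).
\end{align*}
So the claimed formula for $\square$ is equivalent to the classical identity
\begin{align*}
\sum_{i=1}^n A_i(x;t)=\frac{1-t^n}{1-t}=1+t+\dots+t^{n-1}.
\end{align*}

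The main step is proving these two identities. I will use partial fractions (Lagrange interpolation). Consider
\begin{align*}
G(z)=\frac{1}{z}\prod_{j=1}^n\frac{tz-x_j}{z-x_j}.
\end{align*}
Its residue at $z=x_i$ is $(t-1)\,A_i(x;t)$, since the factor $j=i$ contributes $(tx_i-x_i)/x_i\cdot x_i$ after removing $z-x_i$. Its residue at $z=0$ is $\prod_j(-x_j)/(-x_j)=1$. At infinity, $G(z)\sim t^n/z$, so the sum of the finite residues is $t^n$. Hence
\begin{align*}
(t-1)\sum_i A_i(x;t)+1=t^n,
\end{align*}
which is the second identity.

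For the first identity, apply the same argument to
\begin{align*}
H(z)=\frac{1}{z^2}\prod_{j=1}^n\frac{tz-x_j}{z-x_j}.
\end{align*}
Its residue at $x_i$ is $(t-1)A_i(x;t)/x_i$. Since $H(z)=O(z^{-2})$ at infinity, the residue there vanishes. The residue at $0$ is the $z$-coefficient of
\begin{align*}
\prod_j\frac{1-tz/x_j}{1-z/x_j},
\end{align*}
which equals $\sum_j(1-t)/x_j$. Adding these gives
\begin{align*}
(t-1)\sum_i\frac{A_i(x;t)}{x_i}+(1-t)\sum_i\frac{1}{x_i}=0,
\end{align*}
which is exactly the first identity.

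The only genuine content is these two residue computations; in particular, the residue at $0$ for $H$ must be computed carefully. The rest is direct substitution of the definitions.
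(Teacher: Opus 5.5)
Your proof is correct. The paper gives no argument of its own here; it only cites \cite[Proposition~7]{Las98}. So what you have written is a self-contained replacement for that citation, not a variant of a proof in the text.

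Your reduction is exactly right. After inserting the definition of the $q$-derivative, the formula for $\square$ is equivalent to $\sum_{i=1}^n A_i(x;t)=\frac{1-t^n}{1-t}$. The formula for $E$ is equivalent to $\sum_{i=1}^n (A_i(x;t)-1)/x_i=0$.

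The residue computations check out:
\begin{itemize}
\item $G$ has residues $(t-1)A_i(x;t)$ at $x_i$, $1$ at $0$, and $-t^n$ at $\infty$;
\item $H$ has residues $(t-1)A_i(x;t)/x_i$ at $x_i$, $(1-t)\sum_j x_j^{-1}$ at $0$, and $0$ at $\infty$.
\end{itemize}
Two small points of presentation. Your sentence explaining the residue at $x_i$ is muddled, although the value $(t-1)A_i(x;t)$ is right. You should also say explicitly that you specialize to distinct nonzero complex $x_j$ and $t\neq 1$ to apply the residue theorem, then recover the rational-function identity by clearing denominators.

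What the residue method buys is a uniform treatment of both identities with no input from Macdonald theory. If you prefer an argument internal to the paper, both identities also follow from the eigenvalue equations of $D_1$, via these steps:
\begin{enumerate}
\item The second identity is just $D_1(1)=\sum_{i=1}^n t^{n-i}$, since $D_1=\sum_i A_i(x;t)T_{q,i}$ and $T_{q,i}(1)=1$.
\item Applying $D_1$ to $P_{(1)}=e_1$, with eigenvalue $\sum_{i=1}^n t^{n-i}+(q-1)t^{n-1}$, gives $\sum_i A_i(x;t)\,x_i=t^{n-1}e_1(x)$.
\item The substitution $x\mapsto x^{-1}$, together with $A_i(x^{-1};t)=t^{n-1}A_i(x;t^{-1})$ and then $t\mapsto t^{-1}$, turns this into $\sum_i A_i(x;t)/x_i=\sum_i 1/x_i$.
\end{enumerate}
Your reading of $\sum_{i=1}^\infty$ as $\sum_{i=1}^n$ is the correct one.
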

	\begin{lemma}\label{lem:action}
		For any pair of partitions $\lambda\cover\mu$, there exists generalized binomial coefficient $\binom{\lambda}{\mu}\in\Q(q,t)$, such that 
		\begin{align}
			e_1' \cdot J_\mu^* &= \sum_{\lambda\cover\mu} t^{n(\lambda/\mu)}\binom{\lambda}{\mu} J_{\lambda}^*,	\\
			E(\Omega_\lambda) &= \sum_{\mu\coveredby\lambda} \binom{\lambda}{\mu} \Omega_{\mu},	\\
			\square (J_\lambda) &= \rho(\lambda) J_\lambda.
		\end{align}
	\end{lemma}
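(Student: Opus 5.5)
We prove the three parts of \cref{lem:action} separately, relying on standard facts about Macdonald polynomials from \cite[Chapter~VI]{Mac15} and on Lassalle's results \cite{Las98}.

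\textbf{The eigenvalue identity.} We start with $\square(J_\lambda)=\rho(\lambda)J_\lambda$. By the preceding lemma,
\begin{align*}
	\square=\frac{1}{(q-1)t^{n-1}}\Bigl(\sum_i A_i(x;t)T_{q,i}-\frac{1-t^n}{1-t}\Bigr).
\end{align*}
The operator $\sum_i A_i(x;t)T_{q,i}$ is Macdonald's operator $D_n^1$. It acts on $P_\lambda$ with eigenvalue $\sum_{i=1}^n q^{\lambda_i}t^{n-i}$ \cite[(VI.3.6)--(VI.4.15)]{Mac15}. We then use
\begin{align*}
	\sum_i q^{\lambda_i}t^{n-i}-\sum_i t^{n-i}=(q-1)\sum_i t^{n-i}\sum_{j=1}^{\lambda_i}q^{j-1}=(q-1)t^{n-1}\rho(\lambda).
\end{align*}
This gives the claimed eigenvalue, and the identity passes from $P_\lambda$ to $J_\lambda=c_\lambda P_\lambda$ since the two differ by a scalar.

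\textbf{The Pieri rule and the definition of the binomial coefficients.} The Pieri rule for $e_1$ \cite[(VI.6.24)]{Mac15} shows that $e_1J_\mu^*$ is a combination of $J_\lambda^*$ with $\lambda\cover\mu$ only. We therefore \emph{define} $\binom{\lambda}{\mu}$ as the coefficient of $J_\lambda^*$ in $e_1'J_\mu^*$, multiplied by $t^{-n(\lambda/\mu)}$. With this definition the first identity holds tautologically. Its coefficients lie in $\Q(q,t)$.

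\textbf{The adjointness step.} The real content is the second identity: $E$ lowers $\Omega_\lambda$ with exactly these coefficients. We follow the strategy of the Jack case \cite{CS-Jack}.

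First, $E$ maps symmetric polynomials of degree $d$ to symmetric polynomials of degree $d-1$. Symmetry follows from the form $\sum_i A_i\,\partial_{q,x_i}$, and polynomiality follows by the usual divided-difference argument. Hence $E(\Omega_\lambda)=\sum_{|\nu|=|\lambda|-1}d_{\lambda\nu}\Omega_\nu$.

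To identify $d_{\lambda\nu}$, we need an adjointness statement. In the $(q,t)$-Hall product, $e_1'$ is adjoint to $\frac{1-t}{1-q}\cdot\frac{\partial}{\partial p_1}$, up to normalization. So we compare $E$ with the operator adjoint to multiplication by $e_1'$ under the pairing $(f,g)\mapsto\langle f,g\rangle$ twisted by $\Omega$. The cleanest route is the Cauchy/$\hyper{1}{0}$ kernel
\begin{align*}
	K(x,y)=\sum_\lambda t^{n(\lambda)}\frac{(t^n)_\lambda}{1}\,\Omega_\lambda(x)J^*_\lambda(y)=\prod_{i,j}\frac{(tx_iy_j;q)_\infty}{(x_iy_j;q)_\infty}.
\end{align*}
We aim to show the kernel identity $E^{(x)}K=c\,e_1'(y)K$ for an explicit scalar $c$. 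One checks this directly on the product: $T_{q,x_i}K/K=\prod_j\frac{1-x_iy_j}{1-tx_iy_j}$, so the identity reduces to a rational-function identity in $x$, which we prove by partial fractions or residues in $x_i$.

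Expanding both sides of the kernel identity in $\Omega_\nu(x)J^*_\lambda(y)$ gives
\begin{align*}
	d_{\lambda\nu}\,t^{n(\lambda)}(t^n)_\lambda=c\,t^{n(\nu)}(t^n)_\nu\,t^{n(\lambda/\nu)}\binom{\lambda}{\nu}
\end{align*}
for $\lambda\cover\nu$, and $d_{\lambda\nu}=0$ otherwise. Then \cref{eqn:poch-ratio} evaluates $(t^n)_\lambda/(t^n)_\nu=1-t^n\rho(\lambda/\nu)$. Since the kernel has precisely the $(t^n)_\lambda$ factors, a more careful choice of kernel makes the proportionality constant $1$ after dividing by this factor. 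This is likely how the normalization $e_1'=e_1/(1-q)$ was chosen.

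\textbf{Main obstacle.} The hardest step is the kernel identity, together with getting the normalization constant to come out exactly as $1$. If the $\hyper{1}{0}(t^n)$ kernel produces a stray factor $1-t^n\rho(\lambda/\nu)$, the fallback is different. In that case we would instead evaluate $E$ at $x=t^{\bm\delta_n}$ by induction on $n$ using stability, or cite Lassalle \cite{Las98}, who proves $E(\Omega_\lambda)=\sum\binom{\lambda}{\mu}\Omega_\mu$ with his binomial coefficients. We would then verify that his coefficients match the Pieri coefficients defined above, via his identity relating the binomial coefficients to the $e_1$-Pieri rule.
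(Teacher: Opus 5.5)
Your eigenvalue computation for $\square$ (via $D_1=\sum_iA_i(x;t)T_{q,i}$) is correct. So is your choice to \emph{define} $\binom{\lambda}{\mu}$ through the $e_1$-Pieri rule. The gap is in the step that carries all the content: for the Cauchy kernel $K=\prod_{i,j}(tx_iy_j;q)_\infty/(x_iy_j;q)_\infty$, the identity $E^{(x)}K=c\,e_1'(y)K$ is false for every scalar $c$. Already for $n=1$, $T_{q,x}K/K=(1-xy)/(1-txy)$ gives
\[
E^{(x)}K=\frac{1-t}{1-txy}\,e_1'(y)\,K .
\]
So the stray factor $1-t^n\rho(\lambda/\nu)$ you worried about really does occur; on coefficients it is $1-tq^{k-1}$ in degree $k$. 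The rational identity you planned to check does not hold.

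Your remedy, ``a more careful choice of kernel'', does not close the gap. The only kernels $\sum_\lambda C_\lambda\Omega_\lambda(x)J^*_\lambda(y)$ with $E^{(x)}K'=c\,e_1'(y)K'$ are rescalings of ${}_0\Phi_0(x,y)=\sum_\lambda t^{n(\lambda)}\Omega_\lambda(x)J^*_\lambda(y)$. This has no product formula to check against for $n\geq2$, and its equation is just a restatement of the identity you want to prove. The stability fallback is not specified enough to count as an argument. Your last fallback, citing \cite{Las98}, is exactly the paper's proof: it cites Theorems~3 and~5 and (7.2) there. Lassalle proves the $E$-lowering formula and the $e_1$-Pieri formula for the same coefficients $\binom{\lambda}{\mu}_{q,t}$, so no separate matching step is needed.

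Your kernel strategy can be repaired by keeping $K$ and changing the operator on the $y$ side. One computes $[\square,e_1']F=\frac{1}{(1-q)t^{n-1}}\sum_jA_j(y;t)\,y_j\,T_{q,y_j}F$. Using this, the identity
\[
E^{(x)}K=\bigl(e_1'(y)-t^n[\square^{(y)},e_1'(y)]\bigr)K
\]
becomes, after dividing by $K$ and multiplying by $q-1$,
\[
\sum_{i=1}^n\frac{A_i(x;t)}{x_i}\Bigl(\prod_{j=1}^n\frac{1-x_iy_j}{1-tx_iy_j}-1\Bigr)=-e_1(y)+t\sum_{j=1}^nA_j(y;t)\,y_j\prod_{i=1}^n\frac{1-x_iy_j}{1-tx_iy_j}.
\]
This is the residue theorem for $h(w)=w^{-2}\prod_k\frac{tw-x_k}{w-x_k}\bigl(\prod_j\frac{1-wy_j}{1-twy_j}-1\bigr)$. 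The residues sit at $w=0$, $w=x_i$ and $w=1/(ty_j)$, and there is none at $\infty$.

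The $y$-side operator sends $J^*_\mu(y)$ to $\sum_{\lambda\cover\mu}(1-t^n\rho(\lambda/\mu))t^{n(\lambda/\mu)}\binom{\lambda}{\mu}J^*_\lambda(y)$. This uses only your Pieri definition and the eigenvalue identity, as in the proof of \cref{prop:ad}, so there is no circularity. Compare coefficients of $J^*_\lambda(y)$, and use \cref{eqn:poch-ratio} to cancel $(t^n)_\lambda/(t^n)_\mu=1-t^n\rho(\lambda/\mu)$. This gives exactly $E(\Omega_\lambda)=\sum_{\mu\coveredby\lambda}\binom{\lambda}{\mu}\Omega_\mu$. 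The lemma then rests only on the Cauchy identity, the Pieri rule and the eigenvalues of $D_1$, whereas the paper relies entirely on Lassalle.
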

	\begin{proof}
		See Theorem 3, Theorem 5, and Equation (7.2) in \cite{Las98}.
		Note that our binomial coefficient $\binom{\lambda}{\mu}$ is denoted as $\binom{\lambda}{\mu}_{q,t}$ in \cite{Las98}, and our $J^*_\lambda$ and $\Omega_\lambda$ as $J_\lambda^\sharp$ and $J_\lambda^*$, respectively.
	\end{proof}
	
	Recall that the \mydef{adjoint action} is $\ad_{A}(B) \coloneqq [A,B] = AB-BA$, where $A$ and $B$ are operators. Define, recursively, $\ad_{A}^r(B) \coloneqq \ad_A^{r-1}([A,B])$, for $r\geq1$. By convention, let $\ad_{A}^0(B)=B$.
	
	The following proposition is motivated by the Jack case \cite{Mac-HG,CS-Jack}. 
	\begin{prop}\label{prop:ad}
		We have
		\begin{align}\label{eqn:square_e1}
			[\square,e_1'](J_\mu^*)
			&=	 \sum_{\lambda\cover\mu} \rho(\lambda/\mu) t^{n(\lambda/\mu)} \binom{\lambda}{\mu} J_\lambda^*,	\\
			[E,\square](\Omega_\lambda) 
			&= \sum_{\mu\coveredby\lambda} \rho(\lambda/\mu)\binom{\lambda}{\mu} \Omega_{\mu}.
		\end{align}
		More generally, for $l\geq0$, 
		\begin{align}
			\(\ad_{\square}^l(e_1')\)(J_\mu^*)
			&=	 \sum_{\lambda\cover\mu} \rho(\lambda/\mu)^l t^{n(\lambda/\mu)} \binom{\lambda}{\mu} J_\lambda^*,	\label{eqn:adl-e1}\\
			\(\ad_{-\square}^l(E)\)(\Omega_\lambda) 
			&=	\sum_{\mu\coveredby\lambda} \rho(\lambda/\mu)^l \binom{\lambda}{\mu} \Omega_{\mu}.
		\end{align}
	\end{prop}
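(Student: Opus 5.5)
The proof rests on the fact that $\square$ is diagonal in both bases. By \cref{lem:action}, $\square(J_\lambda)=\rho(\lambda)J_\lambda$. Since $J_\lambda^*$ and $\Omega_\lambda$ are scalar multiples of $J_\lambda$ (namely $J_\lambda/j_\lambda$ and $J_\lambda/J_\lambda(t^{\bm\delta_n})$), we also have $\square(J_\lambda^*)=\rho(\lambda)J_\lambda^*$ and $\square(\Omega_\lambda)=\rho(\lambda)\Omega_\lambda$. In particular, $\square$ is linear and acts diagonally in each of these bases with eigenvalue $\rho(\lambda)$.

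For the first identity, I would compute $[\square,e_1'](J_\mu^*)=\square(e_1'J_\mu^*)-e_1'\square(J_\mu^*)$. Expand $e_1'J_\mu^*$ using \cref{lem:action} as $\sum_{\lambda\cover\mu}t^{n(\lambda/\mu)}\binom{\lambda}{\mu}J_\lambda^*$. Applying $\square$ multiplies each term by $\rho(\lambda)$, while the second term gives $\rho(\mu)$ times the same sum. The difference therefore carries the coefficient $\rho(\lambda)-\rho(\mu)=\rho(\lambda/\mu)$, as claimed.

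For the second identity, compute $[E,\square](\Omega_\lambda)=E(\rho(\lambda)\Omega_\lambda)-\square(E\Omega_\lambda)$. Using $E(\Omega_\lambda)=\sum_{\mu\coveredby\lambda}\binom{\lambda}{\mu}\Omega_\mu$, this equals
\[
\sum_{\mu\coveredby\lambda}(\rho(\lambda)-\rho(\mu))\binom{\lambda}{\mu}\Omega_\mu ,
\]
which is the stated formula. Note that $[E,\square]=\ad_{-\square}(E)$, which explains the sign convention in the general statement.

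The general case follows by induction on $l$, with $l=0$ being \cref{lem:action} itself. Suppose the formula holds for $\ad_\square^{l-1}(e_1')$. Then $\ad_\square^l(e_1')=[\square,\ad_\square^{l-1}(e_1')]$. Strictly, the recursive definition reads $\ad^{l}_A(B)=\ad^{l-1}_A([A,B])$, but both orderings give the same operator since each is just $l$-fold iterated bracketing with $A$, so I would note this in passing. The same computation as above then shows that an operator mapping $J_\mu^*\mapsto\sum_{\lambda\cover\mu} c_{\lambda\mu}J_\lambda^*$ is sent by $[\square,\cdot\,]$ to the operator with coefficients $\rho(\lambda/\mu)c_{\lambda\mu}$. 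Likewise $[-\square,\cdot\,]$ sends $\Omega_\lambda\mapsto\sum_\mu d_{\lambda\mu}\Omega_\mu$ to the operator with coefficients $(\rho(\lambda)-\rho(\mu))d_{\lambda\mu}$. Iterating gives the $l$-th powers.

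There is no real obstacle here. The only points needing care are that the operators are determined by their action on bases of $\Lambda_{n,\Q(q,t)}$, so linearity suffices, and that the sign of $\ad_{-\square}$ matches the ordering of the commutator.
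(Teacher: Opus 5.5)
Your proposal is correct and follows essentially the same route as the paper: induction on $l$ with base case from the action lemma, where each bracket with $\square$ (which is diagonal with eigenvalue $\rho(\lambda)$) multiplies the coefficients by $\rho(\lambda)-\rho(\mu)=\rho(\lambda/\mu)$. You also spell out the $\Omega_\lambda$ case, the sign convention $[E,\square]=\ad_{-\square}(E)$, and the equivalence of the two orderings in the recursive definition of $\ad^l$. The paper leaves these points implicit.
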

	\begin{proof}
		For simplicity, we shall only prove \cref{eqn:adl-e1}. 
		Induction on $l$.
		The base case $l=0$ is in \cref{lem:action}.
		For the inductive step, we have
		\begin{align*}
			\(\ad_{\square}^{l+1}(e_1')\)(J_\mu^*)
			&=	[\square,\ad_{\square}^{l}(e_1')](J_\mu^*)
			=	\square \(\(\ad_{\square}^{l}(e_1')\)(J_\mu^*)\) -\(\ad_{\square}^{l}(e_1')\)\(\square(J_\mu^*)\)
			\\&=	\square\(\sum_{\lambda\cover\mu} \rho(\lambda/\mu)^l t^{n(\lambda/\mu)} \binom{\lambda}{\mu} J_\lambda^*\) -\(\ad_{\square}^{l}(e_1')\)\(\rho(\mu)J_\mu^*\)
			\\&=	\sum_{\lambda\cover\mu} \rho(\lambda)\rho(\lambda/\mu)^l t^{n(\lambda/\mu)} \binom{\lambda}{\mu} J_\lambda^* -\rho(\mu)\sum_{\lambda\cover\mu} \rho(\lambda/\mu)^l t^{n(\lambda/\mu)} \binom{\lambda}{\mu} J_\lambda^*
			\\&=	\sum_{\lambda\cover\mu} \rho(\lambda/\mu)^{l+1} t^{n(\lambda/\mu)} \binom{\lambda}{\mu} J_\lambda^*.\qedhere
		\end{align*}
	\end{proof}
	We remark that \cite[Propositions 8 and 9]{Las98} gave a relation equivalent to \cref{eqn:square_e1}. 
\subsection{Macdonald operator}
	Recall that the Macdonald operator $D(u)$ is defined as (see \cite[\textsection VI.3]{Mac15} and \cite[\textsection 5.3]{Noumi})
	\begin{align}
		D(u)=D(u;q,t) \coloneqq \frac{1}{V(x)} \sum_{w\in S_n} \sgn(w) x^{w\bm\delta_n} \prod_{i=1}^n (1+ut^{(w\bm\delta_n)_i} T_{q,i}),
	\end{align}
	which acts diagonally on $(J_\lambda)$ by
	\begin{align}
		D(u)(J_\lambda) = \prod_{i=1}^n(1+uq^{\lambda_i}t^{n-i})\cdot J_\lambda.
	\end{align}
	Define the operators $D_l$, $0\leq l\leq n$, via the expansion:
	\begin{align}
		D(u) = \sum_{l=0}^n u^lD_l,
	\end{align}
	then $D_l$ commutes pairwise since each acts diagonally on $(J_\lambda)$ by
	\begin{align}
		D_l(J_\lambda) = e_l(\overline\lambda)\cdot J_\lambda,
	\end{align}
	where $\overline\lambda_i=q^{\lambda_i}t^{n-i}$, $i=1,\dots,n$.
	
	The operator $D_1$ is closely related to the operator $\square$ since
	\begin{align*}
		D_1=\sum_{i=1}^n A_i(x;t)T_{q,i},
	\end{align*} 
\section{Two alphabets}\label{sec:2arg}
	In this section, we consider
	\begin{align*}
		\hyper{r}{s} = \hyper{r}{s}(\underline{a};\underline{b};x,y;q,t) &= \sum_{\lambda}\frac{\poch{\underline{a}}{\lambda}}{\poch{\underline{b}}{\lambda}} t^{n(\lambda)} \Omega_\lambda(x) J_\lambda^*(y).
	\end{align*}
	Define
	\begin{align}
		\hyper[\L]{}{s}^{(x)}(\underline{b}) &\coloneqq \sum_{l=0}^s (-1)^le_{l}(\underline{b})\ad_{-\square^{(x)}}^l(E^{(x)}) = \(\prod_{k=1}^s (1-b_k\ad_{-\square^{(x)}})\)(E^{(x)}),	\label{eqn:Lsx}\\
		\hyper[\R]{r}{}^{(y)}(\underline{a}) &\coloneqq \sum_{l=0}^r (-1)^le_{l}(\underline{a})\ad_{\square^{(y)}}^l(e_1'(y))= \(\prod_{k=1}^r (1-a_k\ad_{\square^{(y)}})\)(e_1'(y)),\label{eqn:rRy}
	\end{align}
	where $\square^{(x)}$ and $\square^{(y)}$ indicate the operator $\square$ with respect to the $x$ and $y$ variables, respectively, and similarly for $E^{(x)}$ and $e_1'(y)$.
	
	Then by \cref{prop:ad}, we have
	\begin{align}
		\hyper[\L]{}{s}^{(x)}(\underline{b})(\Omega_\lambda(x)) &=
		\sum_{\mu\coveredby\lambda} \sum_{l=0}^s (-1)^le_l(\underline{b})\rho(\lambda/\mu)^l \binom{\lambda}{\mu} \Omega_\mu(x)	\notag
		\\&=\sum_{\mu\coveredby\lambda} \prod_{k=1}^s \(1-b_k\rho(\lambda/\mu)\) \binom{\lambda}{\mu} \Omega_\mu(x), 	\\\intertext{and similarly,}
		\hyper[\R]{r}{}^{(y)}(\underline{a})(J_\mu^*(y)) &=
		\sum_{\lambda\cover\mu} \prod_{k=1}^r \(1-a_k\rho(\lambda/\mu)\) t^{n(\lambda/\mu)} \binom{\lambda}{\mu} J_\lambda^*(y).
	\end{align}
	
	Let $\mathscr F^{(x,y)}$ be the space over $\mathbb Q(q,t)$ of formal power series in the form 
	\begin{align*}
		F(x,y)=\sum_{}C_{a_1,\dots,a_n,b_1,\dots,b_n}(q,t) x_1^{a_1}\cdots x_n^{a_n} y_1^{b_1}\cdots y_n^{b_n},
	\end{align*}
	where the sum is over $a_1,\dots,a_n,b_1,\dots,b_n\geq0$ and $C_{a_1,\dots,a_n,b_1,\dots,b_n}(q,t)\in\mathbb Q(q,t)$, and $F(x,y)$ is symmetric in $x$ and in $y$ separately.
	The hypergeometric operators $\hyper[\L]{}{s}^{(x)}(\underline{b})$ and $\hyper[\R]{r}{}^{(y)}(\underline{a})$ and the Macdonald operators $D^{(x)}(u)$ and $D^{(y)}(u)$ are linear maps on the space $\mathscr F^{(x,y)}$. 
	
	Define $\mathscr F^{(x,y)}_{D}$ as the subspace of $\mathscr F^{(x,y)}$ that consists of formula power series in the form
	\begin{align}\label{eqn:Fxy}
		F(x,y) = \sum_{\lambda} C_\lambda(q,t) t^{n(\lambda)} \frac{J_\lambda(x)J_\lambda(y)}{j_\lambda J_\lambda(t^{\bm\delta_n})},
	\end{align}
	where $C_\lambda(q,t)\in\Q(q,t)$.
	\begin{prop}
		The following are equivalent for $F(x,y)\in\mathscr F^{(x,y)}$: 
		\begin{enumerate}[(1)]
			\item $F(x,y)\in\mathscr F^{(x,y)}_D$.
			\item $F(x,y)$ is in the kernel of $D^{(x)}(u)-D^{(y)}(u)$.
			\item $F(x,y)$ and $D^{(x)}(t)(F(x,y))$ are $\tau$-invariant, where $\tau:\mathscr F^{(x,y)}\to \mathscr F^{(x,y)}$ is the involution of interchanging $x$ and $y$.
		\end{enumerate}
	\end{prop}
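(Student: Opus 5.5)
The natural approach is to expand an arbitrary $F\in\mathscr F^{(x,y)}$ in the product basis $\{J_\lambda(x)J_\mu(y)\}_{\lambda,\mu\in\mathcal P_n}$. Since $F$ is symmetric in $x$ and in $y$ separately, and the $J_\lambda$ form a basis of $\Lambda_{n,\Q(q,t)}$ that is homogeneous in each degree, we can write uniquely
\begin{align*}
	F(x,y)=\sum_{\lambda,\mu} C_{\lambda\mu}\, J_\lambda(x)J_\mu(y), \qquad C_{\lambda\mu}\in\Q(q,t).
\end{align*}
The expansion is legitimate at the level of formal power series because each bihomogeneous component is a finite sum. Membership in $\mathscr F^{(x,y)}_D$ then means exactly that $C_{\lambda\mu}=0$ for $\lambda\neq\mu$. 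Indeed, the diagonal coefficient can be rescaled by the nonzero factor $t^{n(\lambda)}/(j_\lambda J_\lambda(t^{\bm\delta_n}))$, which is nonzero as a rational function by the evaluation formula $t^{n(\lambda)}\poch{t^n}{\lambda}$. All three conditions will be translated into conditions on the matrix $(C_{\lambda\mu})$.

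For (1)$\Leftrightarrow$(2), we use that $D^{(x)}(u)$ and $D^{(y)}(u)$ act diagonally. This gives
\begin{align*}
	\(D^{(x)}(u)-D^{(y)}(u)\)F=\sum_{\lambda,\mu}C_{\lambda\mu}\(\prod_i(1+u\overline\lambda_i)-\prod_i(1+u\overline\mu_i)\)J_\lambda(x)J_\mu(y).
\end{align*}
This vanishes (as a polynomial in $u$) if and only if, for every $\lambda,\mu$ with $C_{\lambda\mu}\neq0$, we have $e_l(\overline\lambda)=e_l(\overline\mu)$ for all $l$. That is, the multisets $\{q^{\lambda_i}t^{n-i}\}$ and $\{q^{\mu_i}t^{n-i}\}$ coincide. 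Since $q,t$ are independent indeterminates, the monomials $q^{\lambda_i}t^{n-i}$ are distinct and determine $(\lambda_i,i)$, so equality of multisets forces $\lambda=\mu$. Conversely, diagonal $F$ is clearly killed.

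For (1)$\Leftrightarrow$(3), note that $\tau$ sends $J_\lambda(x)J_\mu(y)$ to $J_\mu(x)J_\lambda(y)$, so $\tau$-invariance of $F$ means $C_{\lambda\mu}=C_{\mu\lambda}$. Using only the $u=t$ specialization, $D^{(x)}(t)F=\sum C_{\lambda\mu}d_\lambda J_\lambda(x)J_\mu(y)$ with $d_\lambda=\prod_i(1+q^{\lambda_i}t^{n-i+1})$. Its $\tau$-invariance means $C_{\lambda\mu}d_\lambda=C_{\mu\lambda}d_\mu$. Combining this with symmetry gives $C_{\lambda\mu}(d_\lambda-d_\mu)=0$. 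Conversely, a diagonal $F$ satisfies both conditions trivially.

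The main point to check is therefore that $d_\lambda\neq d_\mu$ in $\Q(q,t)$ whenever $\lambda\neq\mu$. I would expand $d_\lambda=\sum_{l}t^l e_l(\overline\lambda)$ as a polynomial in $q,t$ and recover $\lambda$ from it. One option is to look at the terms of $t$-degree: the part $t\,e_1(\overline\lambda)=\sum_i q^{\lambda_i}t^{n-i+1}$. The difficulty is that this part can mix with contributions from higher $e_l$ carrying the same total $t$-power. A cleaner route is to set $q\mapsto$ generic and compare the coefficient of $t^{n}$, or simply to factor. Since $\Q[q,t]$ is a UFD and the factors $1+q^{\lambda_i}t^{n-i+1}$ are irreducible or at least have factorizations determined by $(\lambda_i,n-i+1)$, we have $d_\lambda=d_\mu$ if and only if the multisets of factors agree. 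This recovers $\lambda=\mu$, because the $t$-exponents $n-i+1$ are distinct and pair each factor with its index $i$. I expect this unique-factorization argument (for instance via the $t$-exponent, e.g. by specializing $q$ to a transcendental number) to be the only genuinely nontrivial step.
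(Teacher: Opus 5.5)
Your reduction to the coefficient matrix $(C_{\lambda\mu})$ in the basis $J_\lambda(x)J_\mu(y)$ is sound, and your (1)$\Leftrightarrow$(2) is essentially the paper's (2)$\Rightarrow$(1). For (3) you go directly to $C_{\lambda\mu}(d_\lambda-d_\mu)=0$. The paper instead argues (3)$\Rightarrow$(2) via $\tau D^{(x)}=D^{(y)}\tau$, but that computation only produces the kernel of $D^{(x)}(t)-D^{(y)}(t)$. So the paper's chain also tacitly needs the eigenvalues $d_\lambda=\prod_{i}(1+q^{\lambda_i}t^{n-i+1})$ of $D(t)$ to separate partitions, and you are right that this is the crux.

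The argument you offer for that step does not work as written. The binomials $1+q^at^b$ are in general reducible, and different ones share irreducible factors. Already for $n=3$ and $\lambda=(0,0,0)$ one has $d_\lambda=(1+t)(1+t^2)(1+t^3)=(1+t)^2(1+t^2)(1-t+t^2)$. Unique factorization in $\Q[q,t]$ only gives equality of the multisets of \emph{irreducible} factors of $d_\lambda$ and $d_\mu$. Passing from that to equality of the multisets of binomials needs a further argument you have not given. Knowing that each binomial's factorization is determined by $(\lambda_i,n-i+1)$ is the wrong direction. Your alternative, comparing the coefficient of $t^n$, also does not determine $\lambda$ on its own.

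The repair is your first idea, which you dropped too early: the mixing you worried about is triangular. Put $X_k=q^{\lambda_{n-k+1}}$, so $d_\lambda=\prod_{k=1}^n(1+X_kt^k)$. For $1\le k\le n$, the coefficient of $t^k$ is $\sum_{S}\prod_{j\in S}X_j$, summed over subsets $S\subseteq\{1,\dots,n\}$ with element sum $k$. Apart from $S=\{k\}$, every such $S$ has all its elements smaller than $k$. Hence this coefficient equals $X_k+P_k(X_1,\dots,X_{k-1})$, where $P_k$ is a fixed integer polynomial independent of $\lambda$. By induction on $k$, $d_\lambda=d_\mu$ then gives $X_k(\lambda)=X_k(\mu)$ for all $k$, i.e.\ $\lambda_{n-k+1}=\mu_{n-k+1}$. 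With this lemma inserted your proof is complete, and it makes explicit a step the paper leaves implicit.
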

	\begin{proof}
		The implication (1)$\implies$(3) is obvious.
		
		For (3)$\implies$(2), we note that $\tau D^{(x)}=D^{(y)}\tau$, and hence
		\begin{align*}
			D^{(x)}F -D^{(y)}F = D^{(x)}F -D^{(y)}\tau F = D^{(x)}F -\tau D^{(x)}F=0.
		\end{align*}
		
		We now prove that (2)$\implies$(1).
		Since $(J_\lambda(y))$ forms a basis, we may write $$F(x,y) = \sum_\lambda A_\lambda(x)J_\lambda(y)\in\mathscr F^{(x,y)}$$ for some symmetric formal power series $A_\lambda(x)$.
		Applying $D^{(x)}(u)-D^{(y)}(u)$, we get 
		\begin{align*}
			\sum_\lambda \(D^{(x)}(u)(A_\lambda(x))-\prod_{i=1}^n(1+uq^{\lambda_i}t^{n-i})\cdot A_\lambda(x)\)J_\lambda(y)=0,
		\end{align*}
		which implies 
		\begin{align*}
			D^{(x)}(u)(A_\lambda(x)) = \prod_{i=1}^n(1+uq^{\lambda_i}t^{n-i})\cdot A_\lambda(x)
		\end{align*}
		for all $\lambda$.
		By the well-known fact that the Macdonald operator $D(u)$ characterizes Macdonald polynomials, we conclude that $A_\lambda(x)$ is a scalar multiple of $J_\lambda(x)$.
	\end{proof}

	\begin{maintheorem}\label{thm:2arg}
		The hypergeometric series $\hyper{r}{s}(\underline{a};\underline{b};x,y;q,t)$ is the unique solution in $\mathscr F^{(x,y)}_D$ of the equation 
		\begin{align}\label{eqn:Fxy-LR}
			\(\hyper[\L]{}{s}^{(x)}(\underline{b}) -\hyper[\R]{r}{}^{(y)}(\underline{a})\) (F(x,y))=0,
		\end{align}
		subject to the initial condition that $F(\bm0_n,\bm0_n)=1$, i.e., $C_{(0)}(q,t)=1$.
	\end{maintheorem}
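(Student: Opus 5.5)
Since $F\in\mathscr F^{(x,y)}_D$, we write $F(x,y)=\sum_\lambda C_\lambda\, t^{n(\lambda)}\Omega_\lambda(x)J^*_\lambda(y)$, because $\frac{J_\lambda(x)J_\lambda(y)}{j_\lambda J_\lambda(t^{\bm\delta_n})}=\Omega_\lambda(x)J_\lambda^*(y)$. The plan is to apply both operators term by term using the formulas obtained from \cref{prop:ad}, expand the result in the basis $\Omega_\mu(x)J^*_\lambda(y)$ with $\lambda\cover\mu$, and compare coefficients. This turns \cref{eqn:Fxy-LR} into a first-order recursion for the $C_\lambda$, and the hypergeometric coefficients are exactly the solution of that recursion.

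Concretely, the $x$-operator gives
\begin{align*}
\hyper[\L]{}{s}^{(x)}(\underline b)F=\sum_{\lambda}\sum_{\mu\coveredby\lambda}C_\lambda t^{n(\lambda)}\prod_{k=1}^s\(1-b_k\rho(\lambda/\mu)\)\binom{\lambda}{\mu}\Omega_\mu(x)J^*_\lambda(y),
\end{align*}
and the $y$-operator gives
\begin{align*}
\hyper[\R]{r}{}^{(y)}(\underline a)F=\sum_{\mu}\sum_{\lambda\cover\mu}C_\mu t^{n(\mu)}\prod_{k=1}^r\(1-a_k\rho(\lambda/\mu)\)t^{n(\lambda/\mu)}\binom{\lambda}{\mu}\Omega_\mu(x)J^*_\lambda(y).
\end{align*}
Both sums run over the same pairs $\lambda\cover\mu$, and the products $\Omega_\mu(x)J^*_\lambda(y)$ are linearly independent in $\mathscr F^{(x,y)}$. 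Hence \cref{eqn:Fxy-LR} holds if and only if, for every pair $\lambda\cover\mu$,
\begin{align*}
C_\lambda\prod_{k=1}^s\(1-b_k\rho(\lambda/\mu)\)\binom{\lambda}{\mu}=C_\mu\prod_{k=1}^r\(1-a_k\rho(\lambda/\mu)\)\binom{\lambda}{\mu},
\end{align*}
after cancelling the common factor $t^{n(\lambda)}=t^{n(\mu)}t^{n(\lambda/\mu)}$.

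For existence, we take $C_\lambda=\poch{\underline a}{\lambda}/\poch{\underline b}{\lambda}$. By \cref{eqn:poch-ratio}, $C_\lambda/C_\mu=\prod_k(1-a_k\rho(\lambda/\mu))/\prod_k(1-b_k\rho(\lambda/\mu))$, so the recursion holds, and $C_{(0)}=1$.

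For uniqueness, the $\underline a,\underline b$ are indeterminates, so $\prod_k(1-b_k\rho(\lambda/\mu))\neq0$. We also need $\binom{\lambda}{\mu}\neq0$ for at least one $\mu\coveredby\lambda$. One way to see this is that $e_1'J^*_\mu$ has nonzero expansion, so some coefficient is nonzero. A cleaner route is to take $\mu$ obtained by removing a corner of $\lambda$ and use Lassalle's explicit formula (or the Pieri rule) to show that every such $\binom{\lambda}{\mu}$ is nonzero. Alternatively, by \cref{lem:action} applied to $\lambda$, iterating $E$ lowers $\Omega_\lambda$ to $\Omega_{(0)}$ nontrivially by degree, since $E$ applied $|\lambda|$ times to $\Omega_\lambda$ gives a nonzero constant (the top coefficient is nonzero by comparing with $(\partial)^{|\lambda|}$ of the leading monomial). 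Given such a $\mu$, the recursion determines $C_\lambda$ from $C_\mu$. By induction on $|\lambda|$ starting at $C_{(0)}=1$, all $C_\lambda$ are then forced.

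The main obstacle is this nonvanishing of $\binom{\lambda}{\mu}$ for some $\mu\coveredby\lambda$. The first thing to try is Lassalle's explicit product formula for binomial coefficients of adjacent partitions.
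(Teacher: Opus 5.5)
Your argument is the paper's proof: write $F=\sum_\lambda C_\lambda t^{n(\lambda)}\Omega_\lambda(x)J^*_\lambda(y)$, compare coefficients of $\Omega_\mu(x)J^*_\lambda(y)$ for $\lambda\cover\mu$ to get the first-order recursion, and verify it with \cref{eqn:poch-ratio}. The paper simply asserts $\binom{\lambda}{\mu}\neq0$, and your route (b) is the right justification: the product formula from \cite[Proposition~4.3(3)]{CS24} quoted in the proof of \cref{lem:bino} is, up to a power of $t$, a ratio of factors $1-q^it^j$ with $(i,j)\neq(0,0)$. By contrast, your $e_1'$ argument has the quantifiers backwards, since it only yields, for each $\mu$, some $\lambda\cover\mu$ with nonzero coefficient.
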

	\begin{proof}
		Let $F(x,y)$ in \cref{eqn:Fxy} be a solution of \cref{eqn:Fxy-LR}. 
		Then we have
		\begin{align*}
			\hyper[\L]{}{s}^{(x)}(\underline{b})(F(x,y))
			&=	\sum_{\lambda} C_\lambda(q,t) t^{n(\lambda)} \sum_{\mu\coveredby\lambda} \prod_{k=1}^s \(1-b_k\rho(\lambda/\mu)\) \binom{\lambda}{\mu} \Omega_\mu(x) J_\lambda^*(y)	
			\\&=	\sum_{\mu} \sum_{\lambda\cover\mu} C_\lambda(q,t) \prod_{k=1}^s \(1-b_k\rho(\lambda/\mu)\) t^{n(\lambda)} \binom{\lambda}{\mu} \Omega_\mu(x) J_\lambda^*(y),	\\
			\hyper[\R]{r}{}^{(y)}(\underline{a})(F(x,y))
			&=	\sum_{\mu} \sum_{\lambda\cover\mu} C_\mu(q,t) \prod_{k=1}^r \(1-a_k\rho(\lambda/\mu)\) t^{n(\lambda)} \binom{\lambda}{\mu} \Omega_\mu(x) J_\lambda^*(y).
		\end{align*}
		By comparing the coefficients of $\Omega_\mu(x) J_\lambda^*(y)$ for each pair $\lambda\cover\mu$ (note that in this case $\binom{\lambda}{\mu}\neq0$), we have 
		\begin{align}\label{eqn:rec}
			C_\lambda(q,t) \prod_{k=1}^s \(1-b_k\rho(\lambda/\mu)\) 
			=
			C_\mu(q,t) \prod_{k=1}^r \(1-a_k\rho(\lambda/\mu)\).
		\end{align}
		This recursion and the initial condition determine $(C_\lambda(q,t))$ uniquely.
		
		By \cref{eqn:poch-ratio}, it is obvious that $C_\lambda=\frac{\poch{\underline{a}}{\lambda}}{\poch{\underline{b}}{\lambda}}$ satisfies the recursion and the initial condition, and hence $\hyper{r}{s}(\underline{a};\underline{b};x,y;q,t)$ is the unique solution of \cref{eqn:Fxy-LR}.
	\end{proof}
\subsection{Interchanging the variables}
	By the symmetry between $x$ and $y$ in \cref{eqn:sym-xy}, we may interchange the $x$ and $y$ variables in $\hyper[\L]{}{s}^{(x)}$ and $\hyper[\R]{r}{}^{(y)}$ and define:
	\begin{align}
		\hyper[\L]{}{s}^{(y)}(\underline{b}) &\coloneqq \sum_{l=0}^s (-1)^le_{l}(\underline{b})\ad_{-\square^{(y)}}^l(E^{(y)}) = \(\prod_{k=1}^s (1-b_k\ad_{-\square^{(y)}})\)(E^{(y)}),	\label{eqn:Lsy}\\
		\hyper[\R]{r}{}^{(x)}(\underline{a}) &\coloneqq \sum_{l=0}^r (-1)^le_{l}(\underline{a})\ad_{\square^{(x)}}^l(e_1'(x))= \(\prod_{k=1}^r (1-a_k\ad_{\square^{(x)}})\)(e_1'(x)).\label{eqn:rRx}
	\end{align}
	Then we have 
	\begin{namedtheorem*}{Theorem~A$'$}\label{thm:1'}
		\cref{thm:2arg} holds for the equation
		\begin{align}
			\(\hyper[\L]{}{s}^{(y)}(\underline{b}) -\hyper[\R]{r}{}^{(x)}(\underline{a})\) (F(x,y))=0.
		\end{align}
	\end{namedtheorem*}
\section{One alphabet}\label{sec:1arg}
	In this section, we consider 
	\begin{align*}
		\hyper{r}{s} = \hyper{r}{s}(\underline{a};\underline{b};x;q,t) &= \sum_{\lambda}\frac{\poch{\underline{a}}{\lambda}}{\poch{\underline{b}}{\lambda}} t^{n(\lambda)} J_\lambda^*(x).
	\end{align*}
\subsection{The lowering operator}
	Using the lowering operator $\hyper[\L]{}{s}$ defined in \cref{eqn:Lsx}, we have
	\begin{align*}
		\hyper[\L]{}{s}(\hyper{r}{s})
		&=	\sum_{\lambda}\frac{\poch{\underline{a}}{\lambda}}{\poch{\underline{b}}{\lambda}} t^{n(\lambda)} J_\lambda^*(t^{\bm\delta_n})\hyper[\L]{}{s}^{}(\Omega_\lambda)	
		\\&=	\sum_{\lambda}\frac{\poch{\underline{a}}{\lambda}}{\poch{\underline{b}}{\lambda}} t^{n(\lambda)} J_\lambda^*(t^{\bm\delta_n}) \sum_{\mu\coveredby\lambda} \prod_{k=1}^s \(1-b_k\rho(\lambda/\mu)\) \binom{\lambda}{\mu}\Omega_\mu
		\\&=	\sum_{\mu}\frac{\poch{\underline{a}}{\mu}}{\poch{\underline{b}}{\mu}} t^{n(\mu)} \(\sum_{\lambda\cover\mu} \prod_{k=1}^r \(1-a_k\rho(\lambda/\mu)\) t^{n(\lambda/\mu)} \binom{\lambda}{\mu} \frac{J_\lambda^*(t^{\bm\delta_n})}{J_\mu^*(t^{\bm\delta_n})}\)\cdot J_\mu^*,
	\end{align*}
	In other words, we need to find eigen-operators that have 
	\begin{align}
		G_{l,n}(\mu)\coloneqq\sum_{\lambda\cover\mu} \rho(\lambda/\mu)^l \cdot t^{n(\lambda/\mu)}\binom{\lambda}{\mu} \frac{J_\lambda^*(t^{\bm\delta_n})}{J_\mu^*(t^{\bm\delta_n})}
	\end{align}
	as eigenvalues when acting on $(J_\mu)$.
	
	In this subsection only, for $i=1,\dots,n$, let $z_i=z_i(\mu)\coloneqq q^{\mu_i}t^{1-i}=t^{1-n}\overline \mu_i$.
	\begin{lemma}\label{lem:bino}
		Let $\lambda=\mu+\eps_{i_0}$ be a partition for some $i_0$, then
		$\rho(\lambda/\mu)=z_{i_0}$ and
		\begin{align}
			t^{n(\lambda/\mu)}\binom{\lambda}{\mu} \frac{J_\lambda^*(t^{\bm\delta_n})}{J_\mu^*(t^{\bm\delta_n})}
			=\frac{A_{i_0}(z;t)}{1-q}
			=\frac{1}{1-q}\prod_{i\neq i_0}\frac{tz_{i_0}-z_i}{z_{i_0}-z_i}.
		\end{align}
	\end{lemma}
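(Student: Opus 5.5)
The first claim is immediate. Since $\lambda=\mu+\eps_{i_0}$, the only new box is $(i_0,\mu_{i_0}+1)$. Hence $\rho(\lambda/\mu)=q^{\mu_{i_0}}t^{1-i_0}=z_{i_0}$ and $n(\lambda/\mu)=i_0-1$.

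For the main identity I would evaluate both sides of the lowering relation of \cref{lem:action} at a convenient point. The ratio $J_\lambda^*(t^{\bm\delta_n})/J_\mu^*(t^{\bm\delta_n})$ is explicit from the principal specialization $\eps_{t^n,t}(J_\lambda)=t^{n(\lambda)}\poch{t^n}{\lambda}$ together with $J^*_\lambda=J_\lambda/j_\lambda$. So the real content is a closed formula for $\binom{\lambda}{\mu}$ when $\lambda\cover\mu$.

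Two routes are available for that formula.

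\textbf{Route (a): cite Lassalle.} \cite{Las98} gives the binomial coefficient for a one-box difference as an explicit product over the hooks in row $i_0$ and column $\mu_{i_0}+1$. Substituting it, the product $t^{i_0-1}\binom{\lambda}{\mu}\cdot \frac{t^{n(\lambda)}(t^n)_\lambda j_\mu}{t^{n(\mu)}(t^n)_\mu j_\lambda}$ should telescope. By \cref{eqn:poch-ratio}, the factor $(t^n)_\lambda/(t^n)_\mu$ equals $1-t^nz_{i_0}$. The ratio $j_\mu/j_\lambda$ involves only the boxes in row $i_0$ and column $\mu_{i_0}+1$. Grouping these boxes according to the rows $i\neq i_0$ in which the column lengths change, the factors should collapse to $\prod_{i\ne i_0}\frac{tz_{i_0}-z_i}{z_{i_0}-z_i}$, as in the standard Pieri-coefficient computation of \cite[VI.6]{Mac15}.

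\textbf{Route (b): avoid explicit hooks.} This is the route I would prefer. Apply the identity $e_1'J^*_\mu=\sum_{\lambda\cover\mu}t^{n(\lambda/\mu)}\binom{\lambda}{\mu}J^*_\lambda$ and compare with the Pieri formula $e_1P_\mu=\sum_\lambda\psi'_{\lambda/\mu}P_\lambda$ of \cite[VI.6.24]{Mac15}. This gives
\[ t^{n(\lambda/\mu)}\binom{\lambda}{\mu}=\frac{\psi'_{\lambda/\mu}}{1-q}\cdot\frac{c'_\lambda}{c'_\mu}. \]
Hence the left side of the lemma equals
\[ \frac{1}{1-q}\,\psi'_{\lambda/\mu}\,\frac{P_\lambda(t^{\bm\delta_n})}{P_\mu(t^{\bm\delta_n})}. \]
Both $\psi'_{\lambda/\mu}$ (for a single box, the product over boxes in column $\mu_{i_0}+1$) and $P_\lambda(t^{\bm\delta_n})$ are explicit products in \cite{Mac15}.

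The alternative to both is the known evaluation identity. This is Macdonald's symmetry $\tilde P_\lambda(q^\mu t^{\delta})=\tilde P_\mu(q^\lambda t^\delta)$ applied with $e_1$: it expresses $\sum_\lambda\psi'_{\lambda/\mu}\frac{P_\lambda(t^\delta)}{P_\mu(t^\delta)}\,\cdot$ as the eigenvalue equation $e_1(q^{\mu}t^{\delta})$. That sum is a partial-fraction expansion, which is exactly the form $\sum_i A_i(z;t)$ evaluated against an interpolation. This consistency check is useful, but I would still do the direct product cancellation.

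The main obstacle is this bookkeeping step. One must show that the ratio of hook products over column $\mu_{i_0}+1$ and row $i_0$ reduces, after cancellation, to $\prod_{i\neq i_0}(tz_{i_0}-z_i)/(z_{i_0}-z_i)$. This includes checking that factors corresponding to rows $i$ with $\mu_i=\mu_{i_0}$ (or equal column lengths) cancel correctly, and that the $t$-powers from $t^{n(\lambda/\mu)}$ and the factor $1-t^nz_{i_0}$ are absorbed. I would organize the rows $i\neq i_0$ into maximal blocks of equal $\mu_i$ and verify the telescoping block by block. As a sanity check I would first confirm the cases $n=1$ (both sides equal $1/(1-q)$) and $\mu=(0)$.
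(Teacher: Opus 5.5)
Your plan is correct and is essentially the paper's proof. Route (b) leads to exactly the paper's intermediate expression $\frac{t^{i_0-1}(1-t^nz_{i_0})}{(1-q)\,c_\lambda(s_0)}\prod_{s\in R}\frac{c_\mu(s)}{c_\lambda(s)}\prod_{s\in C}\frac{c'_\mu(s)}{c'_\lambda(s)}$. Here $s_0$ is the added box and $R$, $C$ are the other boxes of $\lambda$ in its row and column; the paper reaches this from the one-box formula for $\binom{\lambda}{\mu}$ in \cite{CS24} rather than from the Pieri rule.

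The telescoping you flag as the main obstacle is done in the paper by writing $\prod_{s\in\text{row }i_0}c_\lambda(s)=(t^{n-i_0+1};q)_{\lambda_{i_0}}\prod_{i>i_0}\frac{(t^{i-i_0};q)_{\lambda_{i_0}-\lambda_i}}{(t^{i-i_0+1};q)_{\lambda_{i_0}-\lambda_i}}$. The row then supplies the factors with $i>i_0$ together with $1/(1-t^nz_{i_0})$, the column supplies those with $i<i_0$ together with $t^{1-i_0}$, and equal parts need no separate treatment.

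Incidentally, the symmetry argument you set aside as a consistency check is already a complete proof with no bookkeeping. The left side is exactly the coefficient of $\Omega_\lambda$ in $e_1'\Omega_\mu$. Apply Macdonald's symmetry $\Omega_\lambda(q^\nu t^{\bm\delta_n})=\Omega_\nu(q^\lambda t^{\bm\delta_n})$ to $D_1\Omega_\nu=e_1(q^\nu t^{\bm\delta_n})\,\Omega_\nu$, evaluated at $x=q^\mu t^{\bm\delta_n}$ for all $\nu$. This gives $e_1\Omega_\mu=\sum_i A_i(q^\mu t^{\bm\delta_n};t)\,\Omega_{\mu+\eps_i}$, and $A_{i_0}(q^\mu t^{\bm\delta_n};t)=A_{i_0}(z;t)$ by homogeneity, since $q^\mu t^{\bm\delta_n}=t^{n-1}z$.
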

	\begin{proof}
		Let $s_0=(i_0,\lambda_{i_0})$ be $\lambda/\mu$, and $R$ and $C$ be the set of other boxes in the row and the column of $s_0$ in $\lambda$.
		By \cite[Proposition~4.3(3)]{CS24},
		\begin{align*}
			\binom{\lambda}{\mu} = t^{-n(\lambda/\mu)} \prod_{s\in C}\frac{c_\lambda(s)}{c_\mu(s)} \prod_{s\in R}\frac{c_\lambda'(s)}{c_\mu'(s)}.
		\end{align*}
		By definition, 
		\begin{align*}
			\frac{j_\mu}{j_\lambda} = \prod_{s\in R\cup C} \frac{c_\mu(s)c_\mu'(s)}{c_\lambda(s)c_\lambda'(s)}\cdot \frac{1}{c_\lambda(s_0)c_\lambda'(s_0)},
		\end{align*}
		\begin{align*}
			\frac{J_\lambda(t^{\bm\delta_n})}{J_\mu(t^{\bm\delta_n})}
			=t^{n(\lambda/\mu)} (1-t^n\rho(\lambda/\mu))
			=t^{n(\lambda/\mu)} (1-t^nz_{i_0}),
		\end{align*}
		hence
		\begin{align*}
			t^{n(\lambda/\mu)}\binom{\lambda}{\mu} \frac{J_\lambda^*(t^{\bm\delta_n})}{J_\mu^*(t^{\bm\delta_n})} 
			= t^{n(\lambda/\mu)}\prod_{s\in C}\frac{c_\mu'(s)}{c_\lambda'(s)} \prod_{s\in R}\frac{c_\mu(s)}{c_\lambda(s)} \cdot \frac{1-t^nz_{i_0}}{c_\lambda(s_0)c_\lambda'(s_0)}.
		\end{align*}
		Now, since
		\begin{align*}
			\prod_{s\in\text{row }i_0} c_\lambda(s) =\poch{t^{n-i+1};q}{\lambda_{i_0}} \cdot \prod_{i=i_0+1}^n \frac{\poch{t^{i-i_0};q}{\lambda_{i_0}-\lambda_i}}{\poch{t^{i-i_0+1};q}{\lambda_{i_0}-\lambda_i}},
		\end{align*}
		we have
		\begin{align*}
			\prod_{s\in R}\frac{c_\mu(s)}{c_\lambda(s)} \frac{1}{c_\lambda(s_0)}
			=\frac{1}{1-t^{n-i_0+1}q^{\mu_i}} \prod_{i=i_0+1}^n \frac{1-t^{i-i_0+1}q^{\mu_{i_0}-\mu_i}}{1-t^{i-i_0}q^{\mu_{i_0}-\mu_i}}
			=\frac{1}{1-t^nz_{i_0}} \prod_{i=i_0+1}^n \frac{tz_{i_0}-z_i}{z_{i_0}-z_i}.
		\end{align*}
		
		As for the column, we have 
		\begin{align*}
			\prod_{s\in C}\frac{c_\mu'(s)}{c_\lambda'(s)}
			=	\prod_{i=1}^{i_0}\frac{1-q^{\mu_i-\mu_{i_0}}t^{i_0-i-1}}{1-q^{\mu_i-\mu_{i_0}}t^{i_0-i}}
			=	\frac{1}{t^{i_0-1}}\prod_{i=1}^{i_0-1} \frac{tz_{i_0}-z_i}{z_{i_0}-z_i}.
		\end{align*}
		The conclusion follows by combining the identities above and note that $c_\lambda'(s_0)=1-q$ and $n(\lambda/\mu)=i_0-1$.
	\end{proof}
	Note that when $\mu+\eps_{i_0}$ is not a partition, namely, when $\mu_{i_0-1}=\mu_{i_0}$, we have $A_{i_0}(z;t)=0$.
	
	Define the generating function
	\begin{align}
		G_{n}(\mu;u) = \sum_{l=0}^\infty G_{l,n}(\mu) u^l. 
	\end{align}
	\begin{prop}
		The generating function $G_{n}(\mu;u)$ can be given by
		\begin{align}
			G_n(\mu;u) = \frac{1}{(1-q)(1-t)}\(1-\prod_{i=1}^n \frac{t-uz_i}{1-uz_i}\).
		\end{align}
	\end{prop}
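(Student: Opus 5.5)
By \cref{lem:bino}, only $\lambda=\mu+\eps_{i_0}$ contribute to $G_{l,n}(\mu)$, and each contributes $z_{i_0}^l A_{i_0}(z;t)/(1-q)$. If $\mu+\eps_{i_0}$ is not a partition, then $\mu_{i_0-1}=\mu_{i_0}$, so $z_{i_0-1}=tz_{i_0}$ and $A_{i_0}(z;t)=0$ (as noted after the lemma). We may therefore sum over all $i_0=1,\dots,n$ without restriction. Summing the geometric series in $u$ gives
\begin{align*}
	G_n(\mu;u)=\frac{1}{1-q}\sum_{i=1}^n \frac{A_i(z;t)}{1-uz_i}=\frac{1}{1-q}\sum_{i=1}^n \frac{1}{1-uz_i}\prod_{j\neq i}\frac{tz_i-z_j}{z_i-z_j}.
\end{align*}
This holds as an identity of formal power series in $u$. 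The claim thus reduces to the purely algebraic identity
\begin{align*}
	\sum_{i=1}^n \frac{1}{1-uz_i}\prod_{j\neq i}\frac{tz_i-z_j}{z_i-z_j}=\frac{1}{1-t}\(1-\prod_{i=1}^n\frac{t-uz_i}{1-uz_i}\),
\end{align*}
which we prove for generic $z$; it then holds for the specialization by continuity, or because both sides are rational in $z$ with no poles there.

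To prove it, consider the rational function in $w$
\begin{align*}
	\phi(w)=\frac{1}{w(1-uw)}\prod_{j=1}^n\frac{tw-z_j}{w-z_j}.
\end{align*}
Its residues are as follows.
\begin{itemize}
	\item At $w=z_i$ the residue is $\frac{(t-1)}{1-uz_i}\prod_{j\neq i}\frac{tz_i-z_j}{z_i-z_j}$, since $tz_i-z_i=(t-1)z_i$ cancels the $1/w$.
	\item At $w=0$ the residue is $\prod_j(-z_j)/(-z_j)=1$.
	\item At $w=1/u$ the residue is $-\prod_j\frac{t-uz_j}{1-uz_j}$; to see this, write $1/(w(1-uw))$ near $w=1/u$ as $-1/(uw(w-1/u))$ with $uw=1$, and $\frac{t/u-z_j}{1/u-z_j}=\frac{t-uz_j}{1-uz_j}$.
	\item At infinity, $\phi(w)=O(w^{-2})$, so the residue there vanishes.
\end{itemize}
The residue theorem then gives $(t-1)\sum_i(\cdots)+1-\prod_j\frac{t-uz_j}{1-uz_j}=0$, which is exactly the identity after dividing by $1-t$.

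The only delicate points are bookkeeping. First, we must confirm that the vanishing of $A_{i_0}$ for non-addable rows really matches the specialization $z_i=q^{\mu_i}t^{1-i}$. Note that $z_i\neq z_j$ always, since the powers of $t$ differ, so the $A_i(z;t)$ are well defined. Second, the signs in the residue at $1/u$ need care. As an alternative to the residue argument, one can prove the identity via a partial fraction expansion of $\prod_j\frac{t-uz_j}{1-uz_j}$ in $u$, checking the constant term ($t^n$ at $u=0$ versus $\sum_i A_i=\frac{1-t^n}{1-t}$, consistent). I expect the residue computation to be the main step, but it is routine.
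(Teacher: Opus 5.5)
Your proof is correct and takes the paper's route: use \cref{lem:bino} to rewrite $G_n(\mu;u)$ as $\frac{1}{1-q}\sum_{i_0=1}^n \frac{A_{i_0}(z;t)}{1-uz_{i_0}}$, then recognize this as the partial fraction expansion of the right-hand side. Your residue computation for $\phi(w)$, together with your check that $A_{i_0}(z;t)=0$ when $\mu+\varepsilon_{i_0}$ is not a partition, simply fills in the details the paper leaves as ``not hard to see.''
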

	\begin{proof}
		By definition and \cref{lem:bino}, we have 
		\begin{align*}
			G_{n}(\mu;u) = \sum_{\lambda\cover\mu} \frac{1}{1-u\rho(\lambda/\mu)} \cdot t^{n(\lambda/\mu)}\binom{\lambda}{\mu} \frac{J_\lambda^*(t^{\bm\delta_n})}{J_\mu^*(t^{\bm\delta_n})}
			= \frac{1}{1-q} \sum_{i_0=1}^n \frac{1}{1-uz_{i_0}} \prod_{i\neq i_0}\frac{tz_{i_0}-z_i}{z_{i_0}-z_i}.
		\end{align*}
		It is not hard to see that this is the partial fraction decomposition of the desired identity.
	\end{proof}
	
	Recall that the Macdonald operator $D(u)$ acts on $(J_\mu)$ by
	\begin{align*}
		D(u)(J_\mu) = \prod_{i=1}^n (1+ut^{n-1}z_i) \cdot J_\mu,
	\end{align*}
	then we have
	\begin{align*}
		\frac{1}{(1-q)(1-t)} \(1-t^n\frac{D(-ut^{-n})}{D(-ut^{1-n})}\) (J_\mu) 
		&=	\frac{1}{(1-q)(1-t)} \(1-\prod_{i=1}^n\frac{t-uz_i}{1-uz_i}\)\cdot J_\mu 
		\\&=	G_n(\mu;u)\cdot J_\mu.
	\end{align*}
	Now, define the operator $\mathcal G_n(u)$ by
	\begin{align}
		\mathcal G_n(u) &\coloneqq \frac{1}{(1-q)(1-t)} \(1-t^n\frac{D(-ut^{-n})}{D(-ut^{1-n})}\)	\notag
		\\&=\frac{1}{(1-q)(1-t)} -\frac{t^n}{(1-q)(1-t)} \sum_{l=0}^n (-ut^{-n})^l D_l\cdot \sum_{k=1}^\infty\(\sum_{l=1}^n (-ut^{1-n})^l D_l\)^k,
	\end{align}
	and define the operators $\mathcal G_{l,n}$ via the expansion
	\begin{align}
		\mathcal G_n \eqqcolon \sum_{l=0}^\infty \mathcal G_{l,n} u^l.
	\end{align}
	Then we have
	\begin{align}
		\mathcal G_n(u)(J_\mu) = G_{n}(\mu;u)\cdot J_\mu,\text{\quad and \quad}\mathcal G_{l,n}(J_\mu) = G_{l,n}(\mu)\cdot J_\mu.
	\end{align}
	\begin{example}\label{ex:G}
		The first few operators $\mathcal G_{l,n}$ are
		\begin{align*}
			\mathcal G_{0,n} &= \frac{1}{1-q}\frac{1-t^n}{1-t},	\\
			\mathcal G_{1,n} &= \frac{D_1}{1-q},	\\
			\mathcal G_{2,n} &= \frac{tD_1^2-(1+t)D_2}{(1-q)t^n},	\\
			\mathcal G_{3,n} &= \frac{t^2D_1^3-(2t^2+t)D_1D_2+(1+t+t^2)D_3}{(1-q)t^{2n}}.	
		\end{align*}
	\end{example}
	Define the operator 
	\begin{align}\label{eqn:rM}
		\hyper[\M] {r}{}(\underline{a})
		\coloneqq \sum_{l=0}^r (-1)^l e_l(\underline{a})\mathcal G_{l,n}
		=\CT\( \prod_{k=1}^r(1-a_ku^{-1})\cdot \mathcal G_{n}(u) \),
	\end{align}
	where $\CT$ denotes the constant term of the Laurent series in $u$ near $u=0$.
	Then $\hyper[\M]{r}{}(\underline{a})$ acts diagonally on $(J_\mu)$ as
	\begin{align}
		\hyper[\M]{r}{}(\underline{a})(J_\mu)
		&=	\sum_{l=0}^r (-1)^l e_l(\underline{a})G_{l,n}(\mu)\cdot J_\mu	\notag
		\\&=	\sum_{\lambda\cover\mu}\prod_{k=1}^r \(1-a_k\rho(\lambda/\mu)\) t^{n(\lambda/\mu)} \binom{\lambda}{\mu} \frac{J_\lambda^*(t^{\bm\delta_n})}{J_\mu^*(t^{\bm\delta_n})} \cdot J_\mu.
	\end{align}
	
	Now, we add a superscript $(n)$ in $\hyper[\L^{(n)}]{}{s}(\underline{b})$ and $\hyper[\M^{(n)}]{r}{}(\underline{a})$ to indicate that the operators are in $n$ variables $(x_1,\dots,x_n)$. For $1\leq m\leq n$, $\hyper[\L^{(m)}]{}{s}(\underline{b})$ and $\hyper[\M^{(m)}]{r}{}(\underline{a})$ will be the corresponding operators in the variables $(x_1,\dots,x_m)$.
	
	Let $\mathscr F^{(x_1,\dots,x_m)}$ be the space over $\Q(q,t)$ of formal power series in the form
	\begin{align}\label{eqn:Fx-m}
		F(x_1,\dots,x_m) = \sum_{\lambda\in\mathcal P_m} C_\lambda(q,t) t^{n(\lambda)} J_\lambda^*(x_1,\dots,x_m),
	\end{align}
	where each $C_\lambda(q,t)$ is in $\Q(q,t)$.
	Then $\hyper[\L^{(m)}]{}{s}(\underline{b})$ and $\hyper[\M^{(m)}]{r}{}(\underline{a})$ act on the space $\mathscr F^{(x_1,\dots,x_m)}$.
	Write $\mathscr F^{(x)}$ for $\mathscr F^{(x_1,\dots,x_n)}$.
	
	\begin{maintheorem}\label{thm:1arg-lowering}
		The hypergeometric series $\hyper{r}{s}(\underline{a};\underline{b};x_1,\dots,x_n;q,t)$ is the unique solution in $\mathscr{F}^{(x)}$ of the equation
		\begin{align}\label{eqn:ML-F}
			\(\hyper[\L^{(n)}]{}{s}(\underline{b}) - \hyper[\M^{(n)}]{r}{}(\underline{a})\) (F(x_1,\dots,x_n))=0, 
		\end{align}
		subject to the initial condition that $F(\bm0_n)=1$, and the stability condition that for each $1\leq m\leq n-1$, $\hyper{r}{s}(\underline{a};\underline{b};x_1,\dots,x_m,0,\dots,0;q,t)$ is a solution of the equation
		\begin{align}\label{eqn:ML-F-m}
			\(\hyper[\L^{(m)}]{}{s}(\underline{b}) - \hyper[\M^{(m)}]{r}{}(\underline{a})\) (F(x_1,\dots,x_m)) = 0.
		\end{align}
	\end{maintheorem}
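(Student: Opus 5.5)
Expand a candidate solution as $F=\sum_{\lambda\in\mathcal P_n} C_\lambda t^{n(\lambda)}J_\lambda^*$ and compare coefficients, as in the proof of \cref{thm:2arg}. The computation at the start of this section shows that $\hyper[\L^{(n)}]{}{s}(\underline{b})-\hyper[\M^{(n)}]{r}{}(\underline{a})$ annihilates $\hyper{r}{s}$. Indeed, by \cref{prop:ad} the lowering operator applied to $J_\lambda^*=J_\lambda^*(t^{\bm\delta_n})\Omega_\lambda$ produces terms indexed by $\mu\coveredby\lambda$, with coefficient $\prod_k(1-b_k\rho(\lambda/\mu))\binom{\lambda}{\mu}$. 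By \cref{eqn:poch-ratio}, $\prod_k(1-b_k\rho(\lambda/\mu))$ turns $\poch{\underline b}{\lambda}^{-1}$ into $\poch{\underline b}{\mu}^{-1}$, and the analogous factor for $\underline{a}$ turns $\poch{\underline a}{\mu}$ into $\poch{\underline a}{\lambda}$. The resulting coefficient of $J_\mu^*$ is exactly the eigenvalue of $\hyper[\M^{(n)}]{r}{}(\underline a)$ on $J_\mu$. Stability of $\hyper{r}{s}$ then gives the stability condition: its restriction to $m$ variables is $\hyper{r}{s}^{(m)}$, and the same computation in $m$ variables applies. Existence is therefore settled.

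For uniqueness, apply the same computation to a general $F$. The coefficient of $J_\mu^*$ in $(\L-\M)F$ is
\[
\sum_{\lambda\cover\mu}\Bigl(C_\lambda\prod_{k=1}^s(1-b_k\rho(\lambda/\mu)) - C_\mu\prod_{k=1}^r(1-a_k\rho(\lambda/\mu))\Bigr)\, w(\lambda,\mu),
\]
where $w(\lambda,\mu)=t^{n(\lambda/\mu)}\binom{\lambda}{\mu}J_\lambda^*(t^{\bm\delta_n})/J_\mu^*(t^{\bm\delta_n})$, which equals $A_{i_0}(z;t)/(1-q)$ by \cref{lem:bino}. Unlike the two-alphabet case, we get only one equation per $\mu$ rather than one per pair $\lambda\cover\mu$. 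Inducting on $|\lambda|$ and setting $D_\lambda=C_\lambda-\poch{\underline a}{\lambda}/\poch{\underline b}{\lambda}$, we obtain a single linear relation per $\mu$ among the unknowns $D_\lambda$ with $\lambda\cover\mu$, with $D_\mu$ known. The difficulty is that there are more unknowns than equations, and the stability hypothesis is what fills the gap.

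The key step is to order the partitions of each size and show that every $\lambda$ is reached by a relation in which it is the only undetermined unknown. The plan is to take $\lambda$ with $\ell(\lambda)=m$ and $\mu=\lambda-\eps_m$, removing the last box of the last nonzero row. The other partitions covering $\mu$ are $\mu+\eps_i$ with $i<m$, which have length at most $m$ and a smaller row, or $\mu+\eps_{m+1}$. Stability in $m$ variables determines all $C_\nu$ with $\ell(\nu)\le m-1$ from the equation in $m-1$ variables. This is legitimate because $J_\nu^*(x_1,\dots,x_{m-1},0,\dots)=0$ when $\ell(\nu)>m-1$, so the restricted series involves exactly those coefficients. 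I would run a double induction on the number of variables $m$ and on $|\lambda|$, together with a secondary induction on $\lambda$ in reverse lexicographic order within length $m$.

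The main obstacle is organizing that secondary induction so that each equation has exactly one new unknown with nonzero weight. I would use the equation for $\mu$ with $\lambda=\mu+\eps_1$ as the target, the lexicographically largest partition, since every other $\mu+\eps_i$ is lexicographically smaller. I would order the unknowns of a given size and length $\le m$ by increasing lexicographic order. Length-$m$ partitions whose first row exceeds the second can then be obtained from $\mu=\lambda-\eps_1$, provided all smaller $\mu+\eps_i$ are known. For $\lambda_1=\lambda_2$, I would instead use the lowest row $i$ that can be decreased while staying a partition. Nonvanishing of the weight follows from $A_{i_0}(z;t)\neq0$ whenever $\mu+\eps_{i_0}$ is a partition, since the $z_i$ are distinct generic monomials in $q,t$. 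Once each $C_\lambda$ is forced, $F=\hyper{r}{s}$.
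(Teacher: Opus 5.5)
Your existence argument and the reduction to the $m$-variable recursions are fine. In particular, the coefficients $C_\nu$ with $\ell(\nu)\le m-1$ are supplied by the previous stage, since $J_\nu^*(x_1,\dots,x_{m-1},0,\dots)=0$ for $\ell(\nu)>m-1$. \textbf{The gap is the ordering you commit to in the last paragraph: it is circular.}

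The lowest decreasable row of $\lambda$ is always row $\ell(\lambda)=m$. So your rule is: use $\mu=\lambda-\eps_1$ if $\lambda_1>\lambda_2$, use $\mu=\lambda-\eps_m$ otherwise, and process in increasing lexicographic order. This already fails for $m=2$ and size $4$.
\begin{itemize}
\item The new unknowns are $C_{(3,1)}$ and $C_{(2,2)}$.
\item Your rule assigns both of them to the single equation for $\mu=(2,1)$, which contains both.
\item When $(2,2)$ is processed first, $C_{(3,1)}$ is still unknown.
\item The equation for $\mu=(3)$ is never used, yet it is the only one that brings in the already known $C_{(4)}$ from the one-variable stage.
\end{itemize}
The failure is general. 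Suppose $\lambda_1=\lambda_2$ and $\lambda_m\ge 2$. Every $m$-variable equation containing $C_\lambda$ is indexed by some $\mu=\lambda-\eps_i$ with $i\ge 2$. Each such equation also contains $C_{\lambda-\eps_i+\eps_1}$ with nonzero weight, and that partition has length $m$ and is lexicographically larger than $\lambda$. So in increasing order $C_\lambda$ is never the sole new unknown. Moreover, the rule $\mu=\lambda-\eps_1$ only ever uses equations indexed by $\mu$ of length $m$, so it never feeds in the information from fewer variables.

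\textbf{The repair is the plan you sketched first and then abandoned.} For every $\lambda$ with $\ell(\lambda)=m$, take $\mu=\lambda-\eps_m$, and process the $\lambda$ of a given size in \emph{decreasing} lexicographic order. In the $m$-variable equation for $\mu$, the term $\mu+\eps_{m+1}$ does not occur. Each other term $\mu+\eps_i$ with $i<m$ is already known, for one of two reasons:
\begin{itemize}
\item if $\lambda_m=1$, it has length $m-1$, so it comes from the previous stage;
\item if $\lambda_m\ge 2$, it has length $m$ and is lexicographically larger than $\lambda$, so it comes from the secondary induction.
\end{itemize}
The weight of $\lambda$ is $\prod_k(1-b_k\rho(\lambda/\mu))\,A_m(z;t)/(1-q)$, which is nonzero. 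The reason is that $\mu_{m-1}>\mu_m$ forces $tz_m\neq z_i$ for all $i<m$; distinctness of the $z_i$ alone is not the relevant fact.

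With this fix, your argument is a valid alternative to the paper's and slightly simpler. The paper also runs through $\mu$ in decreasing lexicographic order. It uses the fact that the recursion depends on $m$ only through the factor $1-t^m\rho(\lambda/\mu)$. This lets it extract two equations per $\mu$ from two different numbers of variables. It then solves for the two new unknowns $\mu+\eps_{\ell(\mu)}$ and $\mu+\eps_{\ell(\mu)+1}$ via a $2\times 2$ system whose determinant is proportional to the difference of their $\rho$'s. The repaired version of your approach needs only one equation per $\mu$ and per $m$, with one new unknown at a time.
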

	The stability condition is inspired by \cite[Theorem 4.10]{Kan96}.
	\begin{proof}
		For each $1\leq m\leq n$, let \cref{eqn:Fx-m} be a solution of \cref{eqn:ML-F-m}, then we have
		\begin{align*}
			&\=	\hyper[\L^{(m)}]{}{s}(\underline{b})(F(x_1,\dots,x_m))
			\\&=	\sum_{\lambda\in\mathcal P_m} C_\lambda(q,t) t^{n(\lambda)} \sum_{\mu\coveredby\lambda} \prod_{k=1}^s \(1-b_k\rho(\lambda/\mu)\) \binom{\lambda}{\mu} \frac{J_\lambda^*(t^{\bm\delta_m})}{J_\mu^*(t^{\bm\delta_m})} J_\mu^*(x_1,\dots,x_m)
			\\&=	\sum_{\mu\in\mathcal P_m} t^{n(\mu)} \sum_{\lambda\cover\mu} C_\lambda(q,t) \prod_{k=1}^s \(1-b_k\rho(\lambda/\mu)\) t^{n(\lambda/\mu)}\binom{\lambda}{\mu} \frac{J_\lambda^*(t^{\bm\delta_m})}{J_\mu^*(t^{\bm\delta_m})} J_\mu^*(x_1,\dots,x_m),
			\intertext{and}
			&\=	(\hyper[\M^{(m)}]{r}{}(\underline{a}))(F(x_1,\dots,x_m))
			\\&=	\sum_{\mu}C_\mu(q,t) t^{n(\mu)} \sum_{\lambda\cover\mu} \prod_{k=1}^r \(1-a_k\rho(\lambda/\mu)\) t^{n(\lambda/\mu)} \binom{\lambda}{\mu} \frac{J_\lambda^*(t^{\bm\delta_m})}{J_\mu^*(t^{\bm\delta_m})} J_\mu^*(x_1,\dots,x_m).
		\end{align*}
		Comparing the coefficients of $J_\mu^*(x_1,\dots,x_m)$, we have
		\begin{equation}\label{eqn:rec-up}
			\begin{split}
				&\=	\sum_{\lambda\cover\mu} C_\lambda(q,t) \prod_{k=1}^s \(1-b_k\rho(\lambda/\mu)\) t^{n(\lambda/\mu)}\binom{\lambda}{\mu} \frac{J_\lambda^*(t^{\bm\delta_m})}{J_\mu^*(t^{\bm\delta_m})} 
				\\&=C_\mu(q,t) \sum_{\lambda\cover\mu} \prod_{k=1}^r \(1-a_k\rho(\lambda/\mu)\) t^{n(\lambda/\mu)} \binom{\lambda}{\mu} \frac{J_\lambda^*(t^{\bm\delta_m})}{J_\mu^*(t^{\bm\delta_m})}.
			\end{split}
		\end{equation}
		By \cref{eqn:poch-ratio}, $C_\lambda(q,t)=\frac{\poch{\underline{a}}{\lambda}}{\poch{\underline{b}}{\lambda}}$ satisfies \cref{eqn:rec-up} and the initial condition, hence $\hyper{r}{s}(\underline{a};\underline{b};x;q,t)$ is a solution of \cref{eqn:ML-F}.
		
		Now we prove the uniqueness.
		Note that 
		\begin{align*}
			\frac{J_\lambda^*(t^{\bm\delta_m})}{J_\mu^*(t^{\bm\delta_m})} = \frac{j_\mu}{j_\lambda} t^{n(\lambda/\mu)} (1-t^m\rho(\lambda/\mu))
		\end{align*}
		is the only term involving $m$ in \cref{eqn:rec-up}. 
		Since \cref{eqn:rec-up} holds for all $1\leq m\leq n$, we can get two equations out of each \cref{eqn:rec-up}:
		\begin{align}
			\sum_{\lambda\cover\mu} C_\lambda(q,t) K_{\underline{b}}(\lambda,\mu)
			&=C_\mu(q,t) \sum_{\lambda\cover\mu} K_{\underline{a}}(\lambda,\mu), \label{eqn:I}	\\
			\sum_{\lambda\cover\mu} C_\lambda(q,t) \rho(\lambda/\mu)K_{\underline{b}}(\lambda,\mu)
			&=C_\mu(q,t) \sum_{\lambda\cover\mu} \rho(\lambda/\mu)K_{\underline{a}}(\lambda,\mu), \label{eqn:II}
		\end{align}
		where
		\begin{align*}
			K_{\underline{b}}(\lambda,\mu) &= \prod_{k=1}^s \(1-b_k\rho(\lambda/\mu)\) t^{2\cdot n(\lambda/\mu)}\binom{\lambda}{\mu} \frac{j_\mu}{j_\lambda},	\\
			K_{\underline{a}}(\lambda,\mu) &= \prod_{k=1}^r \(1-a_k\rho(\lambda/\mu)\) t^{2\cdot n(\lambda/\mu)}\binom{\lambda}{\mu} \frac{j_\mu}{j_\lambda}.
		\end{align*}
		We claim that \cref{eqn:I,eqn:II} and the initial condition determine $(C_\lambda(q,t))$ completely. 
		The following is similar to the proof in \cite[Theorem~B]{CS-Jack}.
		
		When $\mu=(0)$, $C_{(0)}(q,t)=1$, and \cref{eqn:I,eqn:II} involve one unknown $C_{(1)}(q,t)$.
		Then we have 
		\begin{align*}
			C_{(1)}(q,t)=\frac{\prod_{k=1}^r \(1-a_k\rho((1)/(0))\)}{\prod_{k=1}^s \(1-b_k\rho((1)/(0))\)} = \frac{\poch{\underline{a}}{(1)}}{\poch{\underline{b}}{(1)}}.
		\end{align*}
		
		Let $P_n(d)$ be the number of partitions of size $d$ and length at most $n$.
		By induction, assume that all $C_\mu(q,t)$ are known for $|\mu|\leq d-1$.
		Now, we have $P_n(d)$ many unknowns $C_\lambda(q,t)$, and $2P_n(d-1)$ many equations.
		Since $2P_n(d-1)\geq P_n(d)$, this is an over-determined linear system of equations. 
		
		For a partition $\mu$, let $S(\mu)$ be the set of partitions covering $\mu$.
		Let $>_{L}$ be the (reverse) lexicographical total order of the set of partitions of length at most $n$ and of size $d$, that is, $\lambda^1>_{L}\lambda^2$ if and only if the first nonzero entry of $(\lambda_i^1-\lambda^2_i)_i$ is positive.
		
		For each $d\geq1$, let $\mu^1>_{L}\dots>_{L}\mu^{M}$ be all partitions of length at most $n$ and size $d$, where $M=P_n(d)$. 
		Then $\lambda^1=(d)$ and $S(\lambda^1)=\{(d+1),(d,1)\}$.
		We claim that for each $2\leq m\leq M$, the set-theoretic difference $S(\mu^m)\setminus\bigcup_{i=1}^{m-1} S(\mu^{i})$ has cardinality at most two.
		In fact, for each $\mu^m$, there are at most $\ell(\mu^m)+1$ partitions covering $\mu^m$, namely $\mu^m+\eps_i$, for $i=1,\dots,\ell(\mu^m)+1$.
		For $i=1,\dots,\ell(\mu^m)-1$ such that $\mu^m+\eps_i$ is a partition, $\mu^m+\eps_i$ is in $\bigcup_{i=1}^{m-1} S(\mu^{i})$. 
		This is because $\mu^m+\eps_i-\eps_{\ell(\mu^m)}$ is a partition that  precedes $\mu^{m}$ and $\mu^m+\eps_i\in S(\mu^m+\eps_i-\eps_{\ell(\mu^m)})$.
		We are left with at most two partitions that cover $\mu^m$, namely $\mu^m+\eps_{\ell(\mu^m)}$ and $\mu^m+\eps_{\ell(\mu^m)+1}$.
		
		Now by induction on $m$, suppose all $C_\lambda(q,t)$'s are known for $\lambda\in\bigcup_{i=1}^{m-1} S(\mu^{i})$, then \cref{eqn:I,eqn:II} indexed by $\mu^{m}$ involve at most two new unknowns. 
		If there is no new unknown, there is nothing to be done; if there is only one new unknown, it can be determined by \cref{eqn:I} along; now, assume the two new unknowns are $C_{\lambda^{l_1}}(q,t)$ and $C_{\lambda^{l_2}}(q,t)$. 
		\cref{eqn:I,eqn:II} are in the form:
		\begin{align*}
			\begin{pmatrix}
				K_{\underline{b}}(\lambda^{l_1},\mu^m)&K_{\underline{b}}(\lambda^{l_2},\mu^m)	\\
				\rho(\lambda^{l_1}/\mu^m)K_{\underline{b}}(\lambda^{l_1},\mu^m)&\rho(\lambda^{l_2}/\mu^m)K_{\underline{b}}(\lambda^{l_2},\mu^m)
			\end{pmatrix}
			\begin{pmatrix}
				C_{\lambda^{l_1}}(q,t)\\C_{\lambda^{l_2}}(q,t)
			\end{pmatrix}=
			\begin{pmatrix}
				*\\ *
			\end{pmatrix}
		\end{align*}
		where each $*$ involves $C_\lambda(q,t)$ for $\lambda\in \bigcup_{i=1}^{m-1} S(\mu^{i})$. 
		Now, $C_{\lambda^{l_1}}(q,t)$ and $C_{\lambda^{l_2}}(q,t)$ are also determined, since the determinant $K_{\underline{b}}(\lambda^{l_1},\mu^m)K_{\underline{b}}(\lambda^{l_2},\mu^m)(\rho(\lambda^{l_2}/\mu^m)-\rho(\lambda^{l_1}/\mu^m))$ is nonzero.
	\end{proof}
	In the proof above, the extra stability condition is needed to determine the coefficients $C_\lambda$ from \cref{eqn:rec-up}. This is because \cref{eqn:rec-up} proceeds ``in the wrong direction'', which is in turn due to the lowering operator $\L$ that lowers the degree of $J_\lambda$. 
	If we use the raising operator $\R$ in the first place, an easier recursion \cref{eqn:rec-down} would show up and the uniqueness follows immediately, as explained in the next subsection.
\subsection{The raising operator}
	In this subsection, we start with the raising operator $\hyper[\R]{r}{}$ defined in \cref{eqn:rRx}. 
	We have
	\begin{align*}
		\hyper[\R]{r}{}(\hyper{r}{s})
		&=	\sum_{\mu}\frac{\poch{\underline{a}}{\mu}}{\poch{\underline{b}}{\mu}} t^{n(\mu)} \sum_{\lambda\cover\mu} \prod_{k=1}^r \(1-a_k\rho(\lambda/\mu)\) t^{n(\lambda/\mu)} \binom{\lambda}{\mu} J_\lambda^*
		\\&=	\sum_{\lambda}\frac{\poch{\underline{a}}{\lambda}}{\poch{\underline{b}}{\lambda}} t^{n(\lambda)} \sum_{\mu\coveredby\lambda} \prod_{k=1}^s \(1-b_k\rho(\lambda/\mu)\) \binom{\lambda}{\mu} J_\lambda^*.
	\end{align*}
	Let
	\begin{align}
		H_{l}(\lambda) \coloneqq \sum_{\mu\coveredby\lambda} \rho(\lambda/\mu)^l \binom{\lambda}{\mu}.
	\end{align}
	Our goal is to find eigen-operators $\mathcal H_l$ that have $H_{l}(\lambda)$ as eigenvalues when acting on $(J_\lambda)$.
	
	Define the generating function
	\begin{align}
		H(\lambda;u) = \sum_{l=0}^\infty H_l(\lambda) u^l.
	\end{align}
	In this subsection only, let $z_i=q^{\lambda_i}t^{1-i}$ for $i=1,\dots,n$ (different from the previous definition).
	\begin{lemma}\label{lem:bino2}
		Let $\mu=\lambda-\eps_{i_0}$ be a partition for some $i_0$, then 
		$\rho(\lambda/\mu)=z_i/q$ and 
		\begin{align}
			\binom{\lambda}{\mu} = \frac{1-t^{n-1}z_{i_0}}{1-q} A_{i_0}(z;1/t) = \frac{1-t^{n-1}z_{i_0}}{1-q} \prod_{i\neq i_0} \frac{z_{i_0}/t-z_{i}}{z_{i_0}-z_i}.
		\end{align}
	\end{lemma}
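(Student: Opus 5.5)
The plan is to mirror the proof of \cref{lem:bino}, using the explicit product formula for the binomial coefficient of a single-box pair from \cite[Proposition~4.3(3)]{CS24}:
\begin{align*}
	\binom{\lambda}{\mu} = t^{-n(\lambda/\mu)} \prod_{s\in C}\frac{c_\lambda(s)}{c_\mu(s)} \prod_{s\in R}\frac{c_\lambda'(s)}{c_\mu'(s)},
\end{align*}
where $s_0=(i_0,\lambda_{i_0})$ is the removed box, and $R$, $C$ are the other boxes of its row and column in $\lambda$. First, the claim about $\rho$ is immediate: $\rho(\lambda/\mu)=q^{\lambda_{i_0}-1}t^{1-i_0}=z_{i_0}/q$. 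Also $n(\lambda/\mu)=i_0-1$.

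\emph{Row factor.} For $s=(i_0,j)\in R$ with $j<\lambda_{i_0}$, the arm changes by one and the leg is unchanged. So $c_\lambda'(s)/c_\mu'(s)=(1-q^{\lambda_{i_0}-j+1}t^{l})/(1-q^{\lambda_{i_0}-j}t^{l})$. I will group the boxes $j$ according to the leg value $l=\lambda'_j-i_0$. The columns $j$ with $\lambda_{i+1}<j\le\lambda_i$ ($i\ge i_0$) all have leg $i-i_0$, so each group telescopes. The group with $i=i_0$ contributes a factor $1/(1-q)$ from $j=\lambda_{i_0}$, since that box is $s_0$ in $\lambda$ and is excluded; here I must handle the endpoint carefully. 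Telescoping yields a product over $i=i_0+1,\dots,n$ of ratios like $(1-q^{\lambda_{i_0}-\lambda_i}t^{i-i_0-1})/(1-q^{\lambda_{i_0}-\lambda_i}t^{i-i_0})$. In the $z$-variables this is $(z_{i_0}/t-z_i)/(z_{i_0}-z_i)$ up to normalization. The leftover boundary term from the groups with $\lambda_i=0$ (rows beyond $\ell(\lambda)$, up to $n$) should produce the numerator $1-t^{n-1}z_{i_0}=1-q^{\lambda_{i_0}}t^{n-i_0}$. This is exactly why the product must run to $n$, with $z_i=t^{1-i}$ for empty rows.

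\emph{Column factor.} For $s=(i,\lambda_{i_0})\in C$ with $i<i_0$, the leg drops by one and the arm $\lambda_i-\lambda_{i_0}$ is fixed. Hence
\begin{align*}
	\frac{c_\lambda(s)}{c_\mu(s)}=\frac{1-q^{\lambda_i-\lambda_{i_0}}t^{i_0-i+1}}{1-q^{\lambda_i-\lambda_{i_0}}t^{i_0-i}}.
\end{align*}
Rewriting with $q^{\lambda_i-\lambda_{i_0}}t^{i_0-i}=z_i/z_{i_0}$ gives a factor $t\,(z_{i_0}/t-z_i)/(z_{i_0}-z_i)$ for each $i<i_0$. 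The total power $t^{i_0-1}$ cancels the prefactor $t^{-n(\lambda/\mu)}$. Multiplying the row and column factors then gives the claimed formula. As a sanity check, the vanishing of $A_{i_0}(z;1/t)$ when $\lambda_{i_0}=\lambda_{i_0+1}$ (where $z_{i_0+1}=z_{i_0}/t$) matches the requirement that $\mu$ be a partition.

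The main obstacle is the bookkeeping in the row telescoping: getting the exponents of $t$ right in each leg-group, and tracking the boundary at $j=\lambda_{i_0}$ and at the empty rows so that exactly $(1-t^{n-1}z_{i_0})/(1-q)$ remains. I would first verify the formula for $n=1$ and for $n=2$ with $\lambda=(1)$, and only then fix the general indexing.
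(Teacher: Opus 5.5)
Your proposal is correct and follows essentially the same route as the paper. You use the same product formula from \cite{CS24}, the same column computation giving $t^{i_0-1}\prod_{i<i_0}\frac{z_{i_0}/t-z_i}{z_{i_0}-z_i}$, and the same row factor. The only difference is presentation: the paper quotes the closed form $\prod_{s\in\text{row }i_0}c_\lambda'(s)=(qt^{n-i_0};q)_{\lambda_{i_0}}\prod_{i>i_0}\frac{(qt^{i-i_0-1};q)_{\lambda_{i_0}-\lambda_i}}{(qt^{i-i_0};q)_{\lambda_{i_0}-\lambda_i}}$ and takes the ratio for $\lambda$ versus $\mu$, which is what your leg-group telescoping (with $\lambda_{n+1}=0$) produces. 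The bookkeeping you flagged as the main obstacle does work out: the row factor is $\frac{1-q^{\lambda_{i_0}}t^{n-i_0}}{1-q}\prod_{i=i_0+1}^{n}\frac{1-q^{\lambda_{i_0}-\lambda_i}t^{i-i_0-1}}{1-q^{\lambda_{i_0}-\lambda_i}t^{i-i_0}}$, and each ratio is exactly $\frac{z_{i_0}/t-z_i}{z_{i_0}-z_i}$ with no extra normalization.
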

	\begin{proof}
		Let $s_0=(i_0,\lambda_{i_0})$ be $\lambda/\mu$, and $R$ and $C$ be the set of other boxes in the row and the column of $s_0$ in $\lambda$.
		By \cite[Proposition~4.3(3)]{CS24},
		\begin{align*}
			\binom{\lambda}{\mu} = t^{-n(\lambda/\mu)} \prod_{s\in C}\frac{c_\lambda(s)}{c_\mu(s)} \prod_{s\in R}\frac{c_\lambda'(s)}{c_\mu'(s)}.
		\end{align*}
		For the row, note that
		\begin{align*}
			\prod_{s\in\text{row }i_0} c_\lambda'(s) = \poch{qt^{n-i_0};q}{\lambda_{i_0}} \prod_{i=i_0+1}^n \frac{\poch{qt^{i-i_0-1};q}{\lambda_{i_0}-\lambda_i}}{\poch{qt^{i-i_0};q}{\lambda_{i_0}-\lambda_i}},
		\end{align*}
		hence
		\begin{align*}
			\prod_{s\in R}\frac{c_\lambda'(s)}{c_\mu'(s)} 
			&= \frac{1}{c_\lambda'(s_0)}\frac{\prod_{s\in\text{row }i_0} c_\lambda'(s)}{\prod_{s\in\text{row }i_0} c_\mu'(s)}
			=\frac{1-q^{\lambda_{i_0}}t^{n-i_0}}{1-q} \prod_{i=i_0+1}^n \frac{1-q^{\lambda_{i_0}-\lambda_i}t^{i-i_0-1}}{1-q^{\lambda_{i_0}-\lambda_i}t^{i-i_0}}	\\
			&=\frac{1-t^{n-1}z_{i_0}}{1-q} \prod_{i=i_0+1}^n \frac{z_{i_0}/t-z_{i}}{z_{i_0}-z_i}.
		\end{align*}
		For the column, we have
		\begin{align*}
			\prod_{s\in C}\frac{c_\lambda(s)}{c_\mu(s)} 
			&=	\prod_{i=1}^{i_0-1}\frac{1-q^{\lambda_i-\lambda_{i_0}}t^{i_0-i+1}}{1-q^{\lambda_i-\lambda_{i_0}}t^{i_0-i}}
			=t^{i_0-1} \prod_{i=i_0+1}^n \frac{z_{i_0}/t-z_{i}}{z_{i_0}-z_i}.
		\end{align*}
		The claim follows by combining the identities.
	\end{proof}
	\begin{prop}
		The generating function $H(\lambda;u)$ is
		\begin{align}\label{eqn:H}
			H(\lambda;u) = \frac{t}{(1-q)(1-t)} \frac{u-qt^{n-1}}{u}\(\prod_{i=1}^n\frac{1/t-uz_i/q}{1-uz_i/q}-1\)+\frac{q}{1-q}\frac{1-t^n}{1-t}\frac{1}{u}.
		\end{align}
	\end{prop}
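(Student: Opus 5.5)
The plan is to compute $H(\lambda;u)$ directly from \cref{lem:bino2} and then recognize the result as a partial fraction decomposition, in the same way as the proof of the proposition for $G_n(\mu;u)$. Write $w_i\coloneqq z_i/q=q^{\lambda_i-1}t^{1-i}$. Summing the geometric series in $l$ gives
\begin{align*}
H(\lambda;u)=\sum_{\mu\coveredby\lambda}\frac{1}{1-u\rho(\lambda/\mu)}\binom{\lambda}{\mu}
=\frac{1}{1-q}\sum_{i_0}\frac{1-t^{n-1}z_{i_0}}{1-uw_{i_0}}A_{i_0}(z;1/t),
\end{align*}
where the sum runs over those $i_0$ for which $\lambda-\eps_{i_0}$ is a partition.

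The first step is to extend this sum to all $i_0=1,\dots,n$ by showing that the extra terms vanish. Suppose $\lambda-\eps_{i_0}$ is not a partition. If $\lambda_{i_0}=\lambda_{i_0+1}$ with $i_0<n$, then $z_{i_0+1}=z_{i_0}/t$, so the factor $z_{i_0}/t-z_{i_0+1}$ in $A_{i_0}(z;1/t)$ vanishes. If $i_0=n$ and $\lambda_n=0$, then $z_n=t^{1-n}$, so $1-t^{n-1}z_n=0$. Every remaining case (some $i_0<n$ with $\lambda_{i_0}=0$) falls under the first case. Hence the sum may be taken over all $i_0$, with $z$ generic.

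Next I separate the numerator. Since $t^{n-1}z_i=qt^{n-1}w_i$, we have
\begin{align*}
\frac{1-t^{n-1}z_i}{1-uw_i}=\frac{qt^{n-1}}{u}+\frac{u-qt^{n-1}}{u}\cdot\frac{1}{1-uw_i}.
\end{align*}
Therefore
\begin{align*}
(1-q)H(\lambda;u)=\frac{qt^{n-1}}{u}\sum_i A_i(z;1/t)+\frac{u-qt^{n-1}}{u}\sum_i\frac{A_i(z;1/t)}{1-uw_i}.
\end{align*}

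The key identity is the partial fraction expansion of $f(u)\coloneqq\prod_{i=1}^n\frac{1/t-uw_i}{1-uw_i}$. Its only poles are at $u=1/w_i$, which are simple, and $f(\infty)=t^{-n}$. The residue coefficient at the pole $1/w_i$ is $(1/t-1)\prod_{j\neq i}\frac{w_i/t-w_j}{w_i-w_j}=(1/t-1)A_i(z;1/t)$. This gives
\begin{align*}
f(u)=t^{-n}+(1/t-1)\sum_i\frac{A_i(z;1/t)}{1-uw_i}.
\end{align*}
Setting $u=0$, where $f(0)=1$, yields $\sum_iA_i(z;1/t)=\frac{1-t^{-n}}{1-1/t}=\frac{1-t^n}{t^{n-1}(1-t)}$. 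This is the standard identity $\sum_i A_i=[n]$, here at the parameter $1/t$. Consequently
\begin{align*}
\sum_i\frac{A_i}{1-uw_i}=\frac{t}{1-t}\bigl(f(u)-1\bigr)+\frac{1-t^n}{t^{n-1}(1-t)}.
\end{align*}

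Finally I substitute both sums back into the expression for $(1-q)H$. The $\frac{u-qt^{n-1}}{u}\cdot\frac{t}{1-t}(f-1)$ part is exactly the first term of \cref{eqn:H}. The two constant contributions combine as
\begin{align*}
\frac{qt^{n-1}}{u}\cdot\frac{1-t^n}{t^{n-1}(1-t)}+\frac{u-qt^{n-1}}{u}\cdot\frac{1-t^n}{t^{n-1}(1-t)}=\frac{1-t^n}{t^{n-1}(1-t)}.
\end{align*}

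This leftover constant is the step I expect to be the main obstacle. It is independent of $u$, yet it does not visibly match the $\frac{q}{1-q}\frac{1-t^n}{1-t}\frac1u$ term of \cref{eqn:H}. So I would carefully recheck this bookkeeping: the correct form of $\rho(\lambda/\mu)$ (whether it equals $z_{i_0}/q$ or $z_{i_0}$, i.e.\ which box is removed), and the normalization of $A_{i_0}(z;1/t)$ in \cref{lem:bino2}. As a first test I would verify the final formula for $n=1$ with $\lambda=(1)$, where $H$ must equal $\binom{(1)}{(0)}=1$ times $1/(1-u)$ in the normalization $\rho((1)/(0))=1$. I would adjust the substitution until the constants and the $1/u$ term agree with \cref{eqn:H}.
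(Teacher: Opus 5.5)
Your route is the right one: it is the paper's argument run in the opposite direction. The paper expands the first term of \cref{eqn:H} into partial fractions in $u$ and checks two things. The $1/u$ coefficient cancels the second term, and the coefficients at $u=q/z_{i_0}$ match \cref{lem:bino2}. You instead start from \cref{lem:bino2} and reduce to the partial fraction expansion of $f(u)=\prod_i\frac{1/t-uw_i}{1-uw_i}$. Your splitting of $\frac{1-t^{n-1}z_i}{1-uw_i}$ is correct. Your argument that the terms with $\lambda-\eps_{i_0}$ not a partition vanish is also correct, and it fills in a step the paper leaves implicit.

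However, as written the proposal does not reach the statement. It ends with a spurious constant $\frac{1-t^n}{t^{n-1}(1-t)}$ and a plan to ``adjust'' $\rho(\lambda/\mu)$ or the normalization in \cref{lem:bino2}. Neither is at fault: $\rho(\lambda/\mu)=z_{i_0}/q$ and the lemma are fine. The error is in your evaluation of $f$.

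You swapped the two special values. In fact $f(0)=\prod_i\frac{1/t}{1}=t^{-n}$ and $f(\infty)=1$, not the other way round. So the correct expansion is
\begin{align*}
f(u)=1+(t^{-1}-1)\sum_{i=1}^n\frac{A_i(z;1/t)}{1-uw_i}.
\end{align*}
Setting $u=0$ gives $\sum_iA_i(z;1/t)=\frac{1-t^n}{t^{n-1}(1-t)}$. This is the value you wrote, but it does not follow from your version, which gives its negative; a second sign slip in the denominator $1-1/t$ hid this. More importantly, the correct expansion gives $\sum_i\frac{A_i(z;1/t)}{1-uw_i}=\frac{t}{1-t}\bigl(f(u)-1\bigr)$ with no additive constant. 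Your identity cannot be right, since its left side tends to $0$ as $u\to\infty$ and its right side does not. Substituting the corrected identities gives
\begin{align*}
(1-q)H(\lambda;u)=\frac{qt^{n-1}}{u}\cdot\frac{1-t^n}{t^{n-1}(1-t)}+\frac{u-qt^{n-1}}{u}\cdot\frac{t}{1-t}\bigl(f(u)-1\bigr).
\end{align*}
Dividing by $1-q$ gives exactly \cref{eqn:H}. The $\frac{qt^{n-1}}{u}\sum_iA_i$ piece is precisely the term $\frac{q}{1-q}\frac{1-t^n}{1-t}\frac1u$ you could not locate. Your proposed test $n=1$, $\lambda=(1)$ would have confirmed this, since \cref{eqn:H} then equals $\frac{1}{1-u}$, as it should.
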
	
	\begin{proof}
		By definition and \cref{lem:bino2}, 
		\begin{align*}
			H(\lambda;u)=\sum_{\mu\coveredby\lambda} \frac{1}{1-u\rho(\lambda/\mu)} \binom{\lambda}{\mu}
			=\frac{1}{1-q} \sum_{i_0=1}^n \frac{1-t^{n-1}z_{i_0}}{1-uz_{i_0}/q} \prod_{i\neq i_0}\frac{z_{i_0}/t-z_{i}}{z_{i_0}-z_i}.
		\end{align*}
		Expand the following as partial fractions:
		\begin{align*}
			\frac{t}{(1-q)(1-t)} \frac{u-qt^{n-1}}{u}\(\prod_{i=1}^n\frac{1/t-uz_i/q}{1-uz_i/q}-1\) = \frac{a_0}{u}+\sum_{i=1}^n \frac{a_i}{1-uz_i/q}.
		\end{align*}
		(There is no constant term, by taking the limit $u\to\infty$.)
		It is not hard to find that 
		\begin{align*}
			a_0&= -\frac{q}{1-q}\frac{1-t^n}{1-t},	\\
			a_{i_0}&= \frac{1}{1-q} (1-t^{n-1}z_{i_0})  \prod_{i\neq i_0}\frac{z_{i_0}/t-z_{i}}{z_{i_0}-z_i}, \quad i_0=1,\dots,n,
		\end{align*}
		and \cref{eqn:H} follows.
	\end{proof}	
	Recall that the Macdonald operator acts on $(J_\lambda)$ by
	\begin{align*}
		D(u)(J_\lambda) = \prod_{i=1}^n (1+ut^{n-1}z_i) \cdot J_\lambda.
	\end{align*}
	Define the operator $\mathcal H(u)$ and define the operators $\mathcal H_l$ via expansion by
	\begin{align}
		\mathcal H(u)
		&\coloneqq \frac{t}{(1-q)(1-t)} \frac{u-qt^{n-1}}{u}\(t^{-n}\frac{D(-u/(qt^{n-2}))}{D(-u/(qt^{n-1}))}-1\)+\frac{q}{1-q}\frac{1-t^n}{1-t}\frac{1}{u}	\\
		&\eqqcolon \sum_{l=0}^\infty \mathcal H_l u^l,
	\end{align}
	then 
	\begin{align}
		\mathcal H(u)(J_\lambda) = H(\lambda;u)\cdot J_\lambda,\quad
		\mathcal H_l(J_\lambda) = H_l(\lambda)\cdot J_\lambda.
	\end{align}
	\begin{example}
		The first few operators $\mathcal H_l$ are
		\begin{align*}
			\mathcal H_0 &= -\frac{1}{1-q}\frac{1}{t^{n-1}} \(D_1-\frac{1-t^n}{1-t}\)=\square,	\\
			\mathcal H_1 &= \frac{1}{1-q}\frac{1}{qt^{2n-2}}\((t+1)D_2-D_1^2+D_1\),	\\
			\mathcal H_2 &= \frac{1}{1-q}\frac{1}{q^2t^{3n-3}}\(-(t^2+t+1)D_3 +(t+2)D_2D_1-D_1^3 -(t+1)D_2+D_1^2\).
		\end{align*}
		Note that the Macdonald operators $D_l$ are dependent on $n$. However, (the eigenvalues of) the operators $\mathcal H_l$ are stable with respect to $n$.
		For Macdonald operators in infinitely many variables, see \cite{NS14}.
	\end{example}
	Now, define 
	\begin{align}
		\hyper[\mathcal N]{}{s}(\underline{b}) \coloneqq \sum_{l=0}^s (-1)^le_l(\underline{b})\mathcal H_l = \CT\(\prod_{k=1}^s(1-b_ku^{-1})\cdot \mathcal H(u)\),
	\end{align}
	where $\CT$ denotes the constant term of the Laurent series in $u$ near $u=0$.
	Then $\hyper[\mathcal N]{}{s}(\underline{b})$ acts diagonally on $(J_\lambda)$ by
	\begin{align}
		\hyper[\mathcal N]{}{s}(\underline{b})(J_\lambda) 
		=\sum_{l=0}^s (-1)^le_l(\underline{b}) H_l(\lambda)
		=\sum_{\mu\coveredby\lambda} \prod_{k=1}^s \(1-b_k\rho(\lambda/\mu)\) \binom{\lambda}{\mu} \cdot J_\lambda.
	\end{align}
	\begin{maintheorem}\label{thm:1arg-raising}
		The hypergeometric series  $\hyper{r}{s}(\underline{a};\underline{b};x;q,t)$ is the unique solution in $\mathscr F^{(x)}$ of the equation
		\begin{align}\label{eqn:RN-F}
			\(\hyper[\N]{}{s}(\underline{b}) - \hyper[\R]{r}{}(\underline{a})\) (F(x))=0,
		\end{align}
		subject to the condition that $F(\bm0_n)=1$, i.e., $C_{(0)}(q,t)=1$.
	\end{maintheorem}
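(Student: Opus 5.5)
Write a general element $F(x)=\sum_{\lambda} C_\lambda(q,t)\,t^{n(\lambda)}J_\lambda^*(x)$ of $\mathscr F^{(x)}$ and compute both sides of \cref{eqn:RN-F} in the basis $(J_\lambda^*)$, as in the proof of \cref{thm:2arg}. By the diagonal action of $\hyper[\N]{}{s}(\underline{b})$ established just before the statement (it acts on $J_\lambda$, hence on $J_\lambda^*=J_\lambda/j_\lambda$, by the same scalar), we get
\begin{align*}
\hyper[\N]{}{s}(\underline{b})(F)=\sum_\lambda C_\lambda\, t^{n(\lambda)}\Bigl(\sum_{\mu\coveredby\lambda}\prod_{k=1}^s\bigl(1-b_k\rho(\lambda/\mu)\bigr)\binom{\lambda}{\mu}\Bigr)J_\lambda^* .
\end{align*}
By the action of $\hyper[\R]{r}{}(\underline{a})$ on $J_\mu^*$, which follows from \cref{prop:ad}, we get after reindexing
\begin{align*}
\hyper[\R]{r}{}(\underline{a})(F)=\sum_\lambda t^{n(\lambda)}\Bigl(\sum_{\mu\coveredby\lambda}C_\mu\prod_{k=1}^r\bigl(1-a_k\rho(\lambda/\mu)\bigr)\binom{\lambda}{\mu}\Bigr)J_\lambda^* .
\end{align*}
Here we use $t^{n(\mu)}t^{n(\lambda/\mu)}=t^{n(\lambda)}$. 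Since $(J_\lambda^*)$ is a basis, \cref{eqn:RN-F} is equivalent to the family of equations
\begin{align*}
C_\lambda\sum_{\mu\coveredby\lambda}\prod_{k=1}^s\bigl(1-b_k\rho(\lambda/\mu)\bigr)\binom{\lambda}{\mu}=\sum_{\mu\coveredby\lambda}C_\mu\prod_{k=1}^r\bigl(1-a_k\rho(\lambda/\mu)\bigr)\binom{\lambda}{\mu},
\end{align*}
one for each $\lambda\neq(0)$. For $\lambda=(0)$ both sides are empty, so $C_{(0)}$ is free and is fixed by the initial condition.

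\textbf{Existence.} Put $C_\lambda=\poch{\underline a}{\lambda}/\poch{\underline b}{\lambda}$. By \cref{eqn:poch-ratio}, for $\mu\coveredby\lambda$ we have
\begin{align*}
C_\mu\prod_{k}\bigl(1-a_k\rho(\lambda/\mu)\bigr)=C_\lambda\prod_k\bigl(1-b_k\rho(\lambda/\mu)\bigr).
\end{align*}
So the two sides agree term by term in $\mu$. This is exactly the computation displayed before the definition of $H_l$.

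\textbf{Uniqueness.} This is the downward recursion \cref{eqn:rec-down} alluded to at the end of the previous subsection. It expresses $C_\lambda$ through the $C_\mu$ with $|\mu|=|\lambda|-1$, provided the scalar
\begin{align*}
N_\lambda\coloneqq\sum_{\mu\coveredby\lambda}\prod_{k=1}^s\bigl(1-b_k\rho(\lambda/\mu)\bigr)\binom{\lambda}{\mu}
\end{align*}
is nonzero. Induction on $|\lambda|$ then determines all $C_\lambda$ from $C_{(0)}=1$.

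The one real point is showing $N_\lambda\neq0$ in $\Q(q,t)(\underline b)$, with the $b_k$ treated as indeterminates. Expand $N_\lambda$ as a polynomial in $b_1,\dots,b_s$. Its constant term is $H_0(\lambda)=\sum_{\mu\coveredby\lambda}\binom{\lambda}{\mu}$. By the example, $\mathcal H_0=\square$, so this equals $\rho(\lambda)$ by \cref{lem:action}. Since $\rho(\lambda)$ is a nonzero sum of distinct monomials $q^{j-1}t^{1-i}$ for $\lambda\neq(0)$, we get $N_\lambda\neq0$. If instead one wants nonvanishing for generic specialized $b$, the same argument works.

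As a fallback one can argue directly from \cref{lem:bino2}. Each $\binom{\lambda}{\mu}$ is a nonzero rational function, and one checks that the leading coefficients do not cancel. The constant-term argument is cleaner, so I would use that.
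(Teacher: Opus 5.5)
Your proposal is correct and follows the paper's proof: you expand $F$ in the basis $(J_\lambda^*)$, compare coefficients to obtain the downward recursion \cref{eqn:rec-down}, and verify via \cref{eqn:poch-ratio} that $C_\lambda=\poch{\underline a}{\lambda}/\poch{\underline b}{\lambda}$ satisfies it term by term. You also check that the coefficient of $C_\lambda$ is nonzero, since its constant term in $\underline b$ is $H_0(\lambda)=\rho(\lambda)\neq0$; this correctly fills in a step the paper only asserts when it says the recursion determines $(C_\lambda)$ uniquely.
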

	\begin{proof}
		Let \cref{eqn:Fx-m} (with $m=n$) be a solution of \cref{eqn:RN-F}. 
		Then 
		\begin{align*}
			(\hyper[\R]{r}{}(\underline{a}))(F(x))
			&=	\sum_\mu C_\mu(q,t) t^{n(\mu)} \sum_{\lambda\cover\mu} \prod_{k=1}^r(1-a_k\rho(\lambda/\mu)) t^{n(\lambda/\mu)} \binom{\lambda}{\mu} J_\lambda^*
			\\&=	\sum_\lambda t^{n(\lambda)} \sum_{\mu\coveredby\lambda} C_\mu(q,t)  \prod_{k=1}^r(1-a_k\rho(\lambda/\mu)) \binom{\lambda}{\mu} J_\lambda^*
		\end{align*}
		\begin{align*}
			\hyper[\mathcal N]{}{s}(\underline{b})(F(x))
			&=	\sum_\lambda C_\lambda(q,t) t^{n(\lambda)} \sum_{\mu\coveredby\lambda} \prod_{k=1}^s \(1-b_k\rho(\lambda/\mu)\) \binom{\lambda}{\mu} J_\lambda^*,
		\end{align*}
		
		Comparing the coefficients of $J_\lambda^*(x)$, we have
		\begin{equation}\label{eqn:rec-down}
			\sum_{\mu\coveredby\lambda} C_\mu(q,t)  \prod_{k=1}^r(1-a_k\rho(\lambda/\mu)) \binom{\lambda}{\mu}
			=	C_\lambda(q,t) \sum_{\mu\coveredby\lambda} \prod_{k=1}^s \(1-b_k\rho(\lambda/\mu)\) \binom{\lambda}{\mu}.
		\end{equation}
		The recursion and the initial condition determine $(C_\lambda(q,t)))$ uniquely.
		
		It is evident that $C_\lambda(q,t)=\frac{\poch{\underline{a}}{\lambda}}{\poch{\underline{b}}{\lambda}}$ satisfies the recursion and the initial condition, hence $\hyper{r}{s}(\underline{a};\underline{b};x;q,t)$ is the unique solution to \cref{eqn:RN-F}.
	\end{proof}
\section{Related results}\label{sec:related}
\subsection{Jack polynomials and hypergeometric series}\label{sec:jack}
	Jack polynomials $(J_\lambda(x;\alpha))$ can be obtained from Macdonald polynomials $J_\lambda(x;q,t)$ via the classical limit $t=q^{1/\alpha}$, $q\to1$:
	\begin{align*}
		\lim_{q\to1} \frac{J_\lambda(x;q,q^{1/\alpha})}{(1-q)^{|\lambda|}} = J_\lambda(x;\alpha),	\\
		\lim_{q\to1} \frac{J_\lambda^*(x;q,q^{1/\alpha})}{(1-q)^{-|\lambda|}} = J_\lambda^*(x;\alpha),	\\
		\lim_{q\to1} \Omega_\lambda(x;q,q^{1/\alpha}) = \Omega_\lambda(x;\alpha),	
	\end{align*}
	and the $\alpha$-Pochhammer symbol from the $(q,t)$-Pochhammer symbol
	\begin{align*}
		\lim_{q\to1} \frac{\poch{u;q,q^{1/\alpha}}{\lambda}}{(1-q)^{|\lambda|}} =  \poch{u;\alpha}{\lambda}
	\end{align*}
	
	The Jack hypergeometric series is defined as
	\begin{align*}
		\hyper[F]{r}{s}(\underline{a};\underline{b};x;\alpha)
		=\sum_{\lambda\in\mathcal P_n} \frac{\poch{\underline{a};\alpha}{\lambda}}{\poch{\underline{b};\alpha}{\lambda}} \alpha^{|\lambda|} J_\lambda^*(x;\alpha),
	\end{align*}
	which can be derived by
	\begin{align}
		\lim_{q\to1} \hyper{r}{s}(q^{\underline{a}};q^{\underline{b}};(1-q)^{s+1-r}x;q,q^{1/\alpha})
		= \hyper[F]{r}{s}(\underline{a};\underline{b};x;\alpha),
	\end{align}
	where $q^{\underline{a}}=(q^{a_1},\dots,q^{a_r})$ and similarly for $q^{\underline{b}}$ and $cx=(cx_1,\dots,cx_n)$ for $c=(1-q)^{s+1-r}$.
	
	As for operators, however, the operators in this paper do not degenerate to the corresponding operators in \cite{CS-Jack} in general.
	Formally, the $q$-derivative degenerates to the usual derivative
	\begin{align*}
		\lim_{q\to1}\qpartial{x_i} = \lim_{q\to1}\frac{q^{x_i\partial_i}-1}{q-1}\frac{1}{x_i} = \partial_i,
	\end{align*}
	and the operator $E$ degenerates to the differential operator denoted by $E_1$ in \cite{CS-Jack}
	\begin{align*}
		\lim_{\substack{t=q^{1/\alpha}\\q\to1}}　E
		=\lim_{\substack{t=q^{1/\alpha}\\q\to1}}　\sum_{i=1}^n A_i(x;t)\qpartial{x_i} = \sum_{i=1}^n \partial_i.
	\end{align*}
	However, the operator $\square^{(q,t)}$ do not degenerate to the Laplace--Beltrami operator $\square^{(\alpha)}$, instead, it degenerates to the Euler operator, denoted by $E_2$ in \cite{CS-Jack}, 
	\begin{align*}
		\lim_{\substack{t=q^{1/\alpha}\\q\to1}} \square^{(q,t)} =\lim_{\substack{t=q^{1/\alpha}\\q\to1}} \frac{1}{t^{n-1}}\sum_i x_iA_i(x;t)\qpartial{x_i}
		=\sum_i x_i\partial_i.
	\end{align*}
	In conclusion, the operators $\L^{(q,t)},\M^{(q,t)},\N^{(q,t)},\R^{(q,t)}$ in this paper do not degenerate to those $\L^{(\alpha)},\M^{(\alpha)},\N^{(\alpha)},\R^{(\alpha)}$ in \cite{CS-Jack}.
\subsection{Kaneko's hypergeometric series}\label{sec:kaneko}
	In \cite[Eqs.~(3.10),~(5.3)]{Kan96}, Kaneko, independently form Macdonald, defined the following $q$-hypergeometric series in one and two alphabets $x=(x_1,\dots,x_n)$ and $y=(y_1,\dots,y_n)$.
	We now recall them using our notation:
	\begin{align}
		\hyper[\widetilde\Phi]{r}{s}(\underline{a};\underline{b};x;q,t) &= \sum_{\lambda\in\mathcal P_n}  \((-1)^{|\lambda|} q^{n(\lambda')} t^{-n(\lambda)}\)^{s+1-r}  \frac{\poch{\underline{a};q,t}{\lambda}}{\poch{\underline{b};q,t}{\lambda}} t^{n(\lambda)} \frac{J_\lambda(x;q,t)}{j_\lambda},	\label{eqn:Kaneko-1}\\
		\hyper[\widetilde\Phi]{r}{s}(\underline{a};\underline{b};x,y;q,t) &= \sum_{\lambda\in\mathcal P_n}  \((-1)^{|\lambda|} q^{n(\lambda')} t^{-n(\lambda)}\)^{s+1-r}  \frac{\poch{\underline{a};q,t}{\lambda}}{\poch{\underline{b};q,t}{\lambda}} t^{n(\lambda)} \frac{J_\lambda(x;q,t)J_\lambda(y;q,t)}{j_\lambda J_\lambda(t^{\bm\delta_n};q,t)}.\label{eqn:Kaneko-2}
	\end{align}
	(Note: $\hyper[\widetilde\Phi]{r}{s}$ was denoted by $\hyper{r}{s}^{(q,t)}$ in \cite{Kan96}.)
	Compared to Macdonald's definition \cref{eqn:pFq-Mac-1-new,eqn:pFq-Mac-2-new}, Kaneko's definition involves an extra factor $\((-1)^{|\lambda|} q^{n(\lambda')} t^{-n(\lambda)}\)^{s+1-r}.$
	Hence, the two definitions coincide when $s+1=r$.
	
	Kaneko's definition admits the following limit relation: 
	\begin{align}
		\lim_{a_{r+1}\to\infty}\hyper[\widetilde\Phi]{r+1}{s}(\underline{a},a_{r+1};\underline{b};x/a_{r+1};q,t) = \hyper[\widetilde\Phi]{r}{s}(\underline{a};\underline{b};x;q,t).
	\end{align}
	Macdonald's definition has a simpler formula in \cref{eqn:0para}.
	
	Since 
	\begin{align*}
		\lim_{u\to\infty} \frac{\poch{u;q,t}{\lambda}}{u^{|\lambda|}} = (-1)^{|\lambda|} q^{n(\lambda')} t^{-n(\lambda)},
	\end{align*}
	when $r\leq s$, Kaneko's definition can be derived from Macdonald's via the limit
	\begin{align}
		\hyper[\widetilde\Phi]{r}{s}(\underline{a};\underline{b};x;q,t) = \lim_{\substack{u_1\to\infty\\\dots\\u_{s+1-r}\to\infty}} \hyper{s+1}{s}(\underline{a},u_1,\dots,u_{s+1-r};\underline{b};\frac{x}{u_1\cdots u_{s+1-r}};q,t).
	\end{align}
	Termwise, Kaneko's definition and Macdonald's are related by
	\begin{align}
		\hyper[\widetilde\Phi]{r}{s}(\underline{a};\underline{b};x;q,t) 
		&= \hyper{r}{s} (\underline{a}^{-1};\underline{b}^{-1};\frac{a_1\cdots a_r}{q b_1\cdots b_s}x;q^{-1},t^{-1})	\label{eqn:rel-1}\\
		\hyper[\widetilde\Phi]{r}{s}(\underline{a};\underline{b};x,y;q,t) 
		&= \hyper{r}{s} (\underline{a}^{-1};\underline{b}^{-1};\frac{a_1\cdots a_r}{qt^{n-1}b_1\cdots b_s}x,y;q^{-1},t^{-1}),	\label{eqn:rel-2}
		\\\intertext{and conversely,}
		\hyper{r}{s}(\underline{a};\underline{b};x;q,t) 
		&= \hyper[\widetilde\Phi]{r}{s} (\underline{a}^{-1};\underline{b}^{-1};\frac{a_1\cdots a_r}{qb_1\cdots b_s}x;q^{-1},t^{-1}),	\label{eqn:rel-3}\\
		\hyper{r}{s}(\underline{a};\underline{b};x,y;q,t) 
		&= \hyper[\widetilde\Phi]{r}{s}(\underline{a}^{-1};\underline{b}^{-1};\frac{a_1\cdots a_r}{qt^{n-1}b_1\cdots b_s}x,y;q^{-1},t^{-1}),	\label{eqn:rel-4}
	\end{align}
	where $\underline{a}^{-1}=(a_1^{-1},\dots,a_r^{-1})$ and similarly for $\underline{b}^{-1}$.
	One can see that the operations are involutive.
	
	It should be noted that these relations are only formally valid. 
	For instance, Macdonald's $\hyper{0}{0}$ and Kaneko's $\hyper[\widetilde\Phi]{0}{0}$ are as follows:
	\begin{align}
		\hyper{0}{0}(x;q,t) &= \sum_\lambda t^{n(\lambda)} \frac{J_\lambda(x;q,t)}{j_\lambda(q,t)} = \prod_{i=1}^n(x_i;q)_\infty^{-1},	\\
		\hyper[\widetilde\Phi]{0}{0}(x;q,t) &= \sum_\lambda (-1)^{|\lambda|} q^{n(\lambda')} \frac{J_\lambda(x;q,t)}{j_\lambda(q,t)}
		= \prod_{i=1}^n(x_i;q)_\infty.
	\end{align}
	Formally, for each term, we have 
	\begin{align*}
		(-1)^{|\lambda|} q^{n(\lambda')} \frac{J_\lambda(x;q,t)}{j_\lambda(q,t)}
		\mapsto (-1)^{|\lambda|} q^{-n(\lambda')} \frac{J_\lambda(x/q;1/q,1/t)}{j_\lambda(1/q,1/t)} 
		= t^{n(\lambda)}\frac{J_\lambda(x;q,t)}{ j_\lambda(q,t)},
	\end{align*}
	but 
	\begin{align*}
		\prod_{i=1}^n (x/q;1/q)_\infty \neq \prod_{i=1}^n(x_i;q)_\infty^{-1}.
	\end{align*}
	In fact, the LHS converges for $q>1$, while the RHS converges for $0<q<1$.
	
	We now discuss operators. In \cite{Kan96}, the number of variables is $m$, and we change it into $n$.
	The operators in \cite{Kan96} can be ``translated'' into our notation as the following table shows.  
	\begin{table}[h!]
		\centering
		\renewcommand{\arraystretch}{2.5} 
		\begin{tabular}{c|c}
			\hline
			\cite{Kan96} & This paper \\
			\hline
			$\varepsilon$ & $E$ \\
			$D_1^{(q,t)}$ & $\displaystyle t^{n-1}\square = -\frac{D_1-[n]_t}{1-q}$ \\
			$D_2^{(q,t)}$ & $\displaystyle \frac{(1+t)D_2-t[n]_t [n-1]_t}{(1-q)(1-t^2)}$ \\
			\hline
		\end{tabular}
		\vspace{\baselineskip}
		\caption{Translation of the  operators in \cite{Kan96}}
		\label{tab}
	\end{table}
	In the table and below, $[n]_t \coloneqq \frac{1-t^n}{1-t}$ is the $t$-number.
	
	As explained in \cref{sec:intro}, the main result in \cite{Kan96} is a $q$-difference equation characterization of $\hyper{2}{1}(x)$, which we now recall.
	\begin{theorem}[{\cite[Theorem 4.10]{Kan96}}]\label{thm:kaneko}
		The hypergeometric series $\hyper{2}{1}(a,b;c;x_1,\dots,x_n;q,t)$ is the unique solution of the equation $L^{(n)}(S(x_1,\dots,x_n))=0$ (defined below), subject to the following conditions:
		\begin{enumerate}
			\item $S(x_1,\dots,x_n)$ is symmetric in $x_1,\dots,x_n$;
			\item $S(x_1,\dots,x_n)$ is analytic at the origin with $S(\bm0_n)=1$;
			\item $S(x_1,\dots,x_m,0,\dots,0)$ is a solution of $L^{(m)}(S(x_1,\dots,x_m))=0$, for every $m\leq n$.
		\end{enumerate}
	\end{theorem}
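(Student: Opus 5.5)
The plan is to deduce \cref{thm:kaneko} from \cref{thm:1arg-lowering} with $r=2$, $s=1$, $\underline a=(a,b)$, $\underline b=(c)$. The key claim is that for every $m$, Kaneko's operator $L^{(m)}$ is a nonzero scalar multiple of
\[
\hyper[\L^{(m)}]{}{1}(c)-\hyper[\M^{(m)}]{2}{}(a,b).
\]
Once this holds, the two kernels coincide, and Kaneko's three conditions become exactly the hypotheses of \cref{thm:1arg-lowering}:
\begin{enumerate}[(1)]
\item symmetry together with analyticity at the origin means that $S$ is a symmetric formal power series, hence lies in $\mathscr F^{(x)}$, because $(t^{n(\lambda)}J^*_\lambda)$ is a basis of symmetric polynomials over $\Q(q,t)$;
\item $S(\bm0_n)=1$ is the initial condition;
\item the restriction condition for every $m\le n$ is the stability condition, provided the proportionality constant for $L^{(m)}$ is nonzero for each $m$.
\end{enumerate}

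To identify the operators, I would first write ours explicitly. We have $\ad_{-\square}(E)=[E,\square]$, so $\hyper[\L]{}{1}(c)=E-c[E,\square]$. By \cref{ex:G},
\[
\hyper[\M]{2}{}(a,b)=\mathcal G_{0,n}-(a+b)\mathcal G_{1,n}+ab\,\mathcal G_{2,n}
=\frac{[n]_t}{1-q}-(a+b)\frac{D_1}{1-q}+ab\,\frac{tD_1^2-(1+t)D_2}{(1-q)t^n}.
\]
Using \cref{tab}, I would rewrite $D_1$ and $D_2$ in terms of Kaneko's $D_1^{(q,t)}$ and $D_2^{(q,t)}$; note that $D_1=[n]_t-(1-q)D_1^{(q,t)}$. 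This gives a polynomial of degree at most $2$ in $D_1^{(q,t)}$ plus a multiple of $D_2^{(q,t)}$. The term $[E,\square]=E\square-\square E$ is written as $t^{1-n}(\varepsilon D_1^{(q,t)}-D_1^{(q,t)}\varepsilon)$. Kaneko's $L^{(n)}$ is built from $\varepsilon$, $D_1^{(q,t)}$, $D_2^{(q,t)}$ and constants depending on $a,b,c$, so matching coefficients should produce the constant of proportionality, presumably a power of $t$ or $q$ times a sign, which is nonzero.

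If a direct operator identity turns out to be awkward, I would instead compare the two operators on the basis $(\Omega_\lambda)$, which is cleaner. Our operator sends $\Omega_\lambda$ to
\[
\sum_{\mu\coveredby\lambda}(1-c\rho(\lambda/\mu))\binom{\lambda}{\mu}\Omega_\mu-(\text{eigenvalue of }\M)\,\Omega_\lambda .
\]
Kaneko's operator acts on $\Omega_\lambda$ through \cref{lem:action} and \cref{prop:ad} ($\varepsilon=E$ lowers, $D_1^{(q,t)}$ and $D_2^{(q,t)}$ act diagonally), so its action has the same shape: a lowering sum plus a diagonal term. It then suffices to check two things:
\begin{itemize}
\item the lowering coefficients agree up to a global scalar, using $\rho(\lambda/\mu)=\rho(\lambda)-\rho(\mu)$ and $\square\Omega_\lambda=\rho(\lambda)\Omega_\lambda$;
\item the diagonal eigenvalues agree, which is an identity of symmetric functions in $z_i=q^{\lambda_i}t^{1-i}$ that reduces via \cref{lem:bino} to the formula for $G_n(\mu;u)$.
\end{itemize}

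The main obstacle is this bookkeeping step: reconciling Kaneko's normalization, which has $m$-dependent constants inside $L^{(m)}$, with ours, and making sure the proportionality constant is nonzero for every $m$ and not merely for $m=n$. Only then does Kaneko's stability condition (3) translate exactly into the stability hypothesis of \cref{thm:1arg-lowering}. After that, uniqueness and existence both follow immediately from \cref{thm:1arg-lowering}.
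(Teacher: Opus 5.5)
Your proposal follows the paper's route exactly. The paper records the identity $-\frac{1-q}{t^{n-1}}L^{(n)}=\hyper[\M]{2}{}(a,b)-\hyper[\L]{}{1}(c)$, obtained from \cref{ex:G} and the translation table just as you outline, and concludes that the statement is the case $(r,s)=(2,1)$ of \cref{thm:1arg-lowering}; the same computation in $m$ variables gives the constant $-(1-q)t^{1-m}$, which is nonzero for every $m$, so your concern about translating the stability condition is resolved.
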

	The operator $L^{(n)}$ was given in \cite[Eq.(4.2)]{Kan96}, which, in our notation, is 
	\begin{align*}
		L^{(n)}
		&=	c\frac{t^{n-1}}{1-q}[\square,E] 
		-ab\(\(\frac{D_1-[n]_t}{1-q}\)^2-\frac{1-t^2}{t(1-q)}\(\frac{(1+t)D_2-t[n]_t [n-1]_t}{(1-q)(1-t^2)}\)\)
		\\&\=	+\frac{t^{n-1}}{1-q}E
		-\frac{1}{1-q}\(2ab[n]_t-(a+b)t^{n-1}\) \(\frac{D_1-[n]_t}{1-q}\)
		-(1-a)(1-b)\frac{t^{n-1}[n]_t}{(1-q)^2}.
	\end{align*}

	We then have
	\begin{align*}
		-\frac{1-q}{t^{n-1}}L_n &= 
		\(ab\frac{tD_1^2-(1+t) D_2}{(1-q)t^{n}} 
		-(a+b)\frac{ D_1}{1-q}
		+\frac{[n]_t}{1-q}\)
		-\(E+c [\square, E]\).
	\end{align*}
	This is equal to our $\hyper[\M]{2}{}(a,b)-\hyper[\L]{}{1}(c)$; see \cref{eqn:rM,eqn:Lsx,ex:G}.
	In other words, \cref{thm:kaneko} is a special case of \cref{thm:1arg-lowering} when $(r,s)=(2,1)$.
	
	More generally, we can extend our \cref{thm:2arg,thm:1arg-lowering,thm:1arg-raising} into Kaneko's series $\hyper[\widetilde\Phi]{r}{s}$ using the relations \cref{eqn:rel-1,eqn:rel-2,eqn:rel-3,eqn:rel-4}.
	
	Write $\mathcal I$ for the operator that inverts the parameters $\underline{a}$, $\underline{b}$, $q$ and $t$, and $\mathcal S_c$ for scaling the $x$ variables:
	\begin{align*}
		\mathcal S_c (f(x)) = f(cx).
	\end{align*}
	Then \cref{eqn:rel-1,eqn:rel-2} read
	\begin{align*}
		\hyper[\widetilde\Phi]{r}{s}(\underline{a};\underline{b};x;q,t) 
		=	\mathcal S_c (\mathcal I(\hyper{r}{s}(\underline{a};\underline{b};x;q,t))),	\quad
		\hyper[\widetilde\Phi]{r}{s}(\underline{a};\underline{b};x,y;q,t) 
		=	\mathcal S_d (\mathcal I(\hyper{r}{s}(\underline{a};\underline{b};x,y;q,t))),
	\end{align*}
	where 
	\begin{align*}
		c=\frac{a_1\cdots a_r}{qb_1\dots b_s}, \quad d=\frac{a_1\cdots a_r}{qt^{n-1}b_1\dots b_s}.
	\end{align*}
	
	If an operator $\mathcal D$ characterizes a series $f$, then $\mathcal S_{c}\mathcal I\mathcal D\mathcal I^{-1}\mathcal S_c^{-1}$ characterizes $g=\mathcal S_c (\mathcal I(f))$.

	Note that conjugation of $\mathcal I$ on an operator $\mathcal D$ simply invert the parameters in $\mathcal D$.
	Denote by $\widetilde\L$, $\widetilde\M$, $\widetilde\N$, $\widetilde\R$ the operators $\L,\M,\N,\R$ with parameters inverted, respectively. 
	Also note that the operator $\square$ and the Macdonald operators $D_1,\dots,D_n$ are invariant under conjugation of $\mathcal S_{c}$, and 
	\begin{align*}
		\mathcal S_{c}E \mathcal S_c^{-1} = \frac{1}{c}E, \quad \mathcal S_{c}e_1' \mathcal S_c^{-1} = ce_1'.
	\end{align*}
	Then we have
	\begin{align}
		\mathcal S_{c}\mathcal I\L\mathcal I^{-1}\mathcal S_{c}^{-1} = \frac{1}{c}\widetilde\L,\quad
		\mathcal S_{c}\mathcal I\M\mathcal I^{-1}\mathcal S_{c}^{-1} = \widetilde\M,\quad
		\mathcal S_{c}\mathcal I\N\mathcal I^{-1}\mathcal S_{c}^{-1} = \widetilde\N,\quad
		\mathcal S_{c}\mathcal I\R\mathcal I^{-1}\mathcal S_{c}^{-1} = c\widetilde\R.
	\end{align}
	We conclude the following:
	\begin{namedtheorem*}{Theorem $\widetilde A$}
		Let $d=\frac{a_1\cdots a_r}{qt^{n-1}b_1\dots b_s}$ be as above. 
		The hypergeometric series $\hyper[\widetilde\Phi]{r}{s}(\underline{a};\underline{b};x,y;q,t)$ is the unique solution in $\mathscr F^{(x,y)}_D$ of the equation 
		\begin{align}
			\(\frac{1}{d}\cdot\hyper[\widetilde\L]{}{s}^{(x)}(\underline{b}) - \hyper[\widetilde\R]{r}{}^{(y)}(\underline{a})\) (F(x,y))=0,
		\end{align}
		subject to the initial condition that $F(\bm0_n,\bm0_n)=1$, i.e., $C_{(0)}(q,t)=1$.
	\end{namedtheorem*}
	\begin{namedtheorem*}{Theorem $\widetilde B$}
		Let $c=\frac{a_1\cdots a_r}{qb_1\dots b_s}$ be as above.
		The hypergeometric series $\hyper[\widetilde\Phi]{r}{s}(\underline{a};\underline{b};x_1,\dots,x_n;q,t)$ is the unique solution in $\mathscr{F}^{(x)}$ of the equation
		\begin{align}
			\(\hyper[\widetilde\M^{(n)}]{r}{}(\underline{a})-\frac{1}{c}\cdot\hyper[\widetilde\L^{(n)}]{}{s}(\underline{b})\)(F(x_1,\dots,x_n))=0, 
		\end{align}
		subject to the initial condition that $F(\bm0_n)=1$, and the stability condition that for each $1\leq m\leq n-1$, $\hyper[\widetilde\Phi]{r}{s}(\underline{a};\underline{b};x_1,\dots,x_m,0,\dots,0;q,t)$ is a solution of the equation
		\begin{align}
			\(\hyper[\widetilde\M^{(m)}]{r}{}(\underline{a})-\frac{1}{c}\hyper[\widetilde\L^{(m)}]{}{s}(\underline{b})\) (F(x_1,\dots,x_m)) = 0.
		\end{align}
	\end{namedtheorem*}
	\begin{namedtheorem*}{Theorem $\widetilde C$}
		Let $c=\frac{a_1\cdots a_r}{qb_1\dots b_s}$ be as above.
		The hypergeometric series  $\hyper[\widetilde\Phi]{r}{s}(\underline{a};\underline{b};x;q,t)$ is the unique solution in $\mathscr F^{(x)}$ of the equation
		\begin{align}
			\(c\cdot \hyper[\widetilde\R]{r}{}(\underline{a})-\hyper[\widetilde\N]{}{s}(\underline{b})\)(F(x))=0,
		\end{align}
		subject to the condition that $F(\bm0_n)=1$, i.e., $C_{(0)}(q,t)=1$.
	\end{namedtheorem*}
	
	For instance, for $\hyper{0}{0}(x)=\prod_i \poch{x_i;q}{\infty}^{-1}$, we have 
	\begin{align*}
		\L= E,\quad \M=\frac{1}{1-q}\frac{1-t^n}{1-t},\quad \N=\square,\quad \R=e_1',
	\end{align*}
	and $c=1/q$.
	
	One can check that for $\hyper[\widetilde\Phi]{0}{0}(x)=\prod_i \poch{x_i;q}{\infty}$, we have 
	\begin{align*}
		\qpartial[1/q]{x_i}\(\prod_i\poch{x_i;q}{\infty}\)=\frac{1}{q-1}\prod_i(x_i;q),
	\end{align*}
	and so 
	\begin{align*}
		\frac{1}{c}\widetilde L\(\prod_i(x_i;q)\)
		&=	q\sum_i A_i(x;1/t)\qpartial[1/q]{x_i} \(\prod_i(x_i;q)\)
		\\&=	\sum_i A_i(x;1/t) \frac{q}{q-1} \prod_i(x_i;q)
		\\&=	\frac{1}{1-1/q} \frac{1-1/t^n}{1-1/t}  \cdot\prod_i(x_i;q)
		=\widetilde M\(\prod_i(x_i;q)\),
	\end{align*}
	and
	\begin{align*}
		\widetilde\N\(\prod_i(x_i;q)\)
		&=	t^{n-1}\sum_{i} x_iA_i(x;1/t)\qpartial[1/q]{x_i}\(\prod_i(x_i;q)\)
		\\&=	t^{n-1}\sum_{i} x_iA_i(x;1/t)\frac{1}{q-1}\prod_i(x_i;q)
		\\&=	\frac{1}{q}\frac{e_1}{1-1/q} \cdot \prod_i(x_i;q) = c\widetilde R\(\prod_i(x_i;q)\)
		.
	\end{align*}
	as desired.
\subsection{Basic hypergeometric series}
	In this subsection, we discuss how the multivariate theory in this paper is related to the univariate setting discussed briefly in \cref{sec:intro}.
	Setting $n=1$ in Kaneko's series \cref{eqn:Kaneko-1}, we have
	\begin{align}
		\hyper[\widetilde\Phi]{r}{s}(\underline{a};\underline{b};z;q,t) = \sum_{k=0}^\infty \((-1)^k q^{\binom{k}{2}}\)^{s+1-r} \frac{\poch{\underline{a};q}{k}}{\poch{\underline{b};q}{k}} \frac{z^k}{\poch{q;q}{k}},
	\end{align}
	which is precisely \cref{eqn:basic}, the modern definition of the univariate basic hypergeometric series $\hyper[\phi]{r}{s}$ (whereas Macdonald's definition \cref{eqn:pFq-Mac-1-new} reduces to the ``old'' definition of basic hypergeometric series, defined without the factor $\((-1)^k q^{\binom{k}{2}}\)^{s+1-r}$; see \cite[Section 1.2]{GR04}). 
	We see that the RHS is independent of $t$.
	
	Let $T_q$ be the $q$-shift operator such that $T_q f(z) = f(qz)$. Write $\Delta_a  = aT_q-1$.
	The operators $e_1',E,\square$ and $D_1$ become
	\begin{align*}
		e_1' = \frac{z}{1-q},	\quad
		E = \frac{1}{q-1}\frac{1}{z}\Delta_1,	\quad
		\square = \frac{1}{q-1}\Delta_1,	\quad
		D_1 = T_q.
	\end{align*}
	Then we have for $l\geq0$, 
	\begin{align*}
		\ad_{-\square}^l(E) = q^{-l} E T_q^l,	\quad
		\ad_{\square}^l(e_1') = e_1' T_q^l,	
	\end{align*}
	and
	\begin{align*}
		\mathcal G_1(u) 
		&= \frac{1}{(1-q)(1-t)} \(1-t\frac{1-u/tD_1}{1-uD_1}\) 
		= \frac{1}{1-q}\sum_{l=0}^\infty u^lT_q^l,	\\
		\mathcal H(u) 
		&=\frac{t}{(1-q)(1-t)} \frac{u-q}{u}\(t^{-1}\frac{1-ut/qD_1}{1-u/qD_1}-1\)+\frac{q}{1-q}\frac{1}{u}
		=\frac{\Delta_1}{q-1} \sum_{l=0}^\infty u^lq^{-l}T_q^l.
	\end{align*}
	Hence, 
	\begin{align}
		\hyper[\L]{}{s}(\underline{b}) &= \sum_{l=0}^s (-1)^le_{l}(\underline{b})q^{-l} E T_q^l =  \frac{(-1)^{s+1}}{1-q}\frac{1}{z}\Delta_1\Delta_{b_1/q}\cdots\Delta_{b_s/q},	\\
		\hyper[\M]{r}{}(\underline{a}) &=	\frac{1}{1-q}\sum_{l=0}^r (-1)^l e_l(\underline{a})T_q^l = \frac{(-1)^r}{1-q} \Delta_{a_1}\cdots\Delta_{a_r},	\\
		\hyper[\N]{}{s}(\underline{b}) &= \frac{\Delta_1}{q-1} \sum_{l=0}^s (-1)^le_l(\underline{b}) q^{-l}T_q^l = \frac{(-1)^{s+1}}{1-q}\Delta_1 \Delta_{b_1/q}\cdots \Delta_{b_s/q},	\\
		\hyper[\R]{r}{}(\underline{a}) &= \sum_{l=0}^r (-1)^le_{l}(\underline{a})e_1' T_q^l =  \frac{(-1)^r}{1-q} z\Delta_{a_1}\cdots \Delta_{a_r}.
	\end{align}
	
	Recall from the last subsection that putting $\sim$ above an operator means inverting the parameters. We then have
	\begin{align*}
		\widetilde{\Delta_a} = \frac{1}{a} T_{1/q}-1 = -\frac{1}{a}T_{1/q}\Delta_a,
	\end{align*}
	and hence,
	\begin{align}
		\hyper[\widetilde\L]{}{s}(\underline{b}) &=  \frac{q^{s+1}}{q-1}\frac{1}{b_1\cdots b_s} \frac{1}{z}T_{1/q}^{s+1}\Delta_1 \Delta_{b_1/q}\cdots\Delta_{b_s/q},	\\
		\hyper[\widetilde\M]{r}{}(\underline{a}) &= \frac{q}{q-1} \frac{1}{a_1\cdots a_r} T_{1/q}^r\Delta_{a_1}\cdots\Delta_{a_r}	\\
		\hyper[\widetilde\N]{}{s}(\underline{b}) &= \frac{q^{s+1}}{q-1}\frac{1}{b_1\cdots b_s}T_{1/q}^{s+1}\Delta_1 \Delta_{b_1/q}\cdots \Delta_{b_s/q},	\\
		\hyper[\widetilde\R]{r}{}(\underline{a}) &= \frac{q}{q-1} \frac{1}{a_1\cdots a_r} zT_{1/q}^r \Delta_{a_1}\cdots \Delta_{a_r}.
	\end{align}
	Recall that $c=\frac{a_1\cdots a_r}{q b_1\cdots b_s}$ and note that $q^{-s-1} T_{q}^{s+1}z = z T_{q}^{s+1}$, then
	\begin{align*}
		c\cdot \hyper[\widetilde\R]{r}{}(\underline{a})-\hyper[\widetilde\N]{}{s}(\underline{b})
		&=	\frac{q^{s+1}}{q-1}\frac{T_{1/q}^{s+1}}{b_1\cdots b_s}\(z \Delta_{a_1}\cdots \Delta_{a_r}T_{q}^{s+1-r} -\Delta_1 \Delta_{b_1/q}\cdots \Delta_{b_s/q}\),
	\end{align*}
	which recovers \cref{eqn:q-difference}.
	As for $\hyper[\widetilde\M]{r}{}(\underline{a})-\frac{1}{c}\cdot\hyper[\widetilde\L]{}{s}(\underline{b})$, we have 
	\begin{align*}
		\hyper[\widetilde\M]{r}{}(\underline{a})-\frac{1}{c}\cdot\hyper[\widetilde\L]{}{s}(\underline{b})
		&=	\frac{q^{s+2}}{q-1} \frac{1}{a_1\cdots a_r}\frac{1}{z} T_{1/q}^{s+1} \(  z\Delta_{a_1}\cdots\Delta_{a_r}T_{q}^{s+1-r} - \Delta_1 \Delta_{b_1/q}\cdots\Delta_{b_s/q}\),
	\end{align*}
	which is equal to $\frac{1}{cz}\cdot \(c\cdot \hyper[\widetilde\R]{r}{}(\underline{a})-\hyper[\widetilde\N]{}{s}(\underline{b})\)$.
	
	In conclusion, the $q$-difference equations in this paper are natural multivariate generalization of the univariate theory.
\section*{Acknowledgments}
	The author is grateful to Siddhartha Sahi for many helpful discussions and valuable guidance.
	This work was partially supported by the Lebowitz Summer Research Fellowship and the SAS Fellowship at Rutgers University.

\printbibliography

@misc {CS-Jack,
	AUTHOR = {Chen, Hong and Sahi, Siddhartha},
	TITLE = {A characterization of {M}acdonald's {J}ack hypergeometric series ${}_pF_q(x;\alpha)$ and ${}_pF_q(x,y;\alpha)$ via differential equations},
	year={2025},
	eprint={2510.10875},
	archivePrefix={arXiv},
	primaryClass={math.CO},
	url={https://arxiv.org/abs/2510.10875}, 
}

@phdthesis{Chen-thesis,
  author = {Hong Chen},
  title  = {Hypergeometric Series Associated with Jack and Macdonald Polynomials},
  school = {Rutgers University},
  year         = {2026},
  address      = {Piscataway, NJ},
  type         = {Ph.D. thesis},
}

@misc{LWYZ-Mac,
      title={Superintegrability for some $(q,t)$-deformed matrix models}, 
      author={Liu, Fan and Wang, Rui and Yang, Jie and Zhao, Wei-Zhong},
      year={2025},
      eprint={2510.18524},
      archivePrefix={arXiv},
      primaryClass={hep-th},
      url={https://arxiv.org/abs/2510.18524}, 
}

@book {Noumi,
	AUTHOR = {Noumi, Masatoshi},
	TITLE = {Macdonald polynomials---commuting family of {$q$}-difference
	operators and their joint eigenfunctions},
	SERIES = {SpringerBriefs in Mathematical Physics},
	VOLUME = {50},
	PUBLISHER = {Springer, Singapore},
	YEAR = {[2023] \copyright 2023},
	PAGES = {viii+132},
	ISBN = {978-981-99-4586-3; 978-981-99-4587-0},
	MRCLASS = {33D52 (33D45)},
	MRNUMBER = {4647625},
	DOI = {10.1007/978-981-99-4587-0},
	URL = {https://doi.org/10.1007/978-981-99-4587-0},
}

@book {GR04,
	AUTHOR = {Gasper, George and Rahman, Mizan},
	TITLE = {Basic hypergeometric series},
	SERIES = {Encyclopedia of Mathematics and its Applications},
	VOLUME = {96},
	EDITION = {Second},
	PUBLISHER = {Cambridge University Press, Cambridge},
	YEAR = {2004},
	PAGES = {xxvi+428},
	ISBN = {0-521-83357-4},
	MRCLASS = {33Dxx (05A30 05E35 33-01 33-02)},
	MRNUMBER = {2128719},
	MRREVIEWER = {Shaun\ Cooper},
	DOI = {10.1017/CBO9780511526251},
	URL = {https://doi.org/10.1017/CBO9780511526251},
}

@book {AAR99,
	AUTHOR = {Andrews, George E. and Askey, Richard and Roy, Ranjan},
	TITLE = {Special functions},
	SERIES = {Encyclopedia of Mathematics and its Applications},
	VOLUME = {71},
	PUBLISHER = {Cambridge University Press, Cambridge},
	YEAR = {1999},
	PAGES = {xvi+664},
	ISBN = {0-521-62321-9; 0-521-78988-5},
	MRCLASS = {33-01 (33-02)},
	MRNUMBER = {1688958},
	MRREVIEWER = {Bruce\ C.\ Berndt},
	DOI = {10.1017/CBO9781107325937},
	URL = {https://doi.org/10.1017/CBO9781107325937},
}

@article {Yan92,
	AUTHOR = {Yan, Zhi Min},
	TITLE = {A class of generalized hypergeometric functions in several
	variables},
	JOURNAL = {Canad. J. Math.},
	FJOURNAL = {Canadian Journal of Mathematics. Journal Canadien de
	Math\'ematiques},
	VOLUME = {44},
	YEAR = {1992},
	NUMBER = {6},
	PAGES = {1317--1338},
	ISSN = {0008-414X,1496-4279},
	MRCLASS = {33C70 (32A99)},
	MRNUMBER = {1192421},
	MRREVIEWER = {A.\ K.\ Agarwal},
	DOI = {10.4153/CJM-1992-079-x},
	URL = {https://doi.org/10.4153/CJM-1992-079-x},
}

@article{Con63,
	title = {Some non-central distribution problems in multivariate analysis},
	author = {Constantine, A. G.},
	date = {1963-12},
	journaltitle = {The Annals of Mathematical Statistics},
	shortjournal = {Ann. Math. Statist.},
	volume = {34},
	number = {4},
	pages = {1270--1285},
	issn = {0003-4851},
	doi = {10.1214/aoms/1177703863},
	url = {http://projecteuclid.org/euclid.aoms/1177703863},
	urldate = {2025-04-05},
	langid = {english},
}

@article {M70,
	AUTHOR = {Muirhead, R. J.},
	TITLE = {Systems of partial differential equations for hypergeometric
	functions of matrix argument},
	JOURNAL = {Ann. Math. Statist.},
	FJOURNAL = {Annals of Mathematical Statistics},
	VOLUME = {41},
	YEAR = {1970},
	PAGES = {991--1001},
	ISSN = {0003-4851},
	MRCLASS = {33.20 (62.40)},
	MRNUMBER = {264799},
	MRREVIEWER = {I.\ Olkin},
	DOI = {10.1214/aoms/1177696975},
	URL = {https://doi.org/10.1214/aoms/1177696975},
}

@book {Erd81,
	AUTHOR = {Erd\'elyi, Arthur and Magnus, Wilhelm and Oberhettinger, Fritz
	and Tricomi, Francesco G.},
	TITLE = {Higher transcendental functions. {V}ol. {I}},
	PUBLISHER = {Robert E. Krieger Publishing Co., Inc., Melbourne, FL},
	YEAR = {1981},
	PAGES = {xiii+302},
	ISBN = {0-89874-069-X},
	MRCLASS = {33-02 (01A75)},
	MRNUMBER = {698779},
}

@article {Kan93,
	AUTHOR = {Kaneko, Jyoichi},
	TITLE = {Selberg integrals and hypergeometric functions associated with
	{J}ack polynomials},
	JOURNAL = {SIAM J. Math. Anal.},
	FJOURNAL = {SIAM Journal on Mathematical Analysis},
	VOLUME = {24},
	YEAR = {1993},
	NUMBER = {4},
	PAGES = {1086--1110},
	ISSN = {0036-1410},
	MRCLASS = {33C80},
	MRNUMBER = {1226865},
	MRREVIEWER = {Kevin\ W. J. Kadell},
	DOI = {10.1137/0524064},
	URL = {https://doi.org/10.1137/0524064},
}

@article {CM72,
	AUTHOR = {Constantine, A. G. and Muirhead, R. J.},
	TITLE = {Partial differential equations for hypergeometric functions of
	two argument matrices},
	JOURNAL = {J. Multivariate Anal.},
	FJOURNAL = {Journal of Multivariate Analysis},
	VOLUME = {2},
	YEAR = {1972},
	PAGES = {332--338},
	ISSN = {0047-259X},
	MRCLASS = {33A30 (62H10)},
	MRNUMBER = {333262},
	MRREVIEWER = {A.\ M.\ Mathai},
	DOI = {10.1016/0047-259X(72)90020-6},
	URL = {https://doi.org/10.1016/0047-259X(72)90020-6},
}

@article {Fujikoshi,
	AUTHOR = {Fujikoshi, Yasunori},
	TITLE = {Partial differential equations for hypergeometric functions
	{$\sb{3}F\sb{2}$} of matrix argument},
	JOURNAL = {Canad. J. Statist.},
	FJOURNAL = {The Canadian Journal of Statistics. La Revue Canadienne de
	Statistique},
	VOLUME = {3},
	YEAR = {1975},
	NUMBER = {2},
	PAGES = {153--163},
	ISSN = {0319-5724,1708-945X},
	MRCLASS = {62H10 (33A30)},
	MRNUMBER = {400540},
	DOI = {10.2307/3315276},
	URL = {https://doi.org/10.2307/3315276},
}

@misc{Mac-HG,
	title={Hypergeometric functions {I}}, 
	author={Ian G. Macdonald},
	year={2013},
	eprint={1309.4568},
	archivePrefix={arXiv},
	primaryClass={math.CA},
	url={https://arxiv.org/abs/1309.4568}, 
}

@misc{Mac-HG-2,
	title={Hypergeometric functions {II} ($q$-analogues)}, 
	author={Ian G. Macdonald},
	year={2013},
	eprint={1309.5208},
	archivePrefix={arXiv},
	primaryClass={math.CA},
	url={https://arxiv.org/abs/1309.5208}, 
}

@book {AskeyBateman,
	TITLE = {Encyclopedia of special functions: the {A}skey-{B}ateman
	project. {V}ol. 2. {M}ultivariable special functions},
	EDITOR = {Koornwinder, Tom H. and Stokman, Jasper V.},
	PUBLISHER = {Cambridge University Press, Cambridge},
	YEAR = {2021},
	PAGES = {xii+427},
	ISBN = {978-1-107-00373-6; 978-1-108-88244-6},
	MRCLASS = {33-02 (05Axx 33C20 33C52 33C65 33C80)},
	MRNUMBER = {4421384},
	DOI = {10.1017/9780511777165},
	URL = {https://doi.org/10.1017/9780511777165},
}

@article {War05,
	AUTHOR = {Warnaar, S. Ole},
	TITLE = {{$q$}-{S}elberg integrals and {M}acdonald polynomials},
	JOURNAL = {Ramanujan J.},
	FJOURNAL = {Ramanujan Journal. An International Journal Devoted to the
	Areas of Mathematics Influenced by Ramanujan},
	VOLUME = {10},
	YEAR = {2005},
	NUMBER = {2},
	PAGES = {237--268},
	ISSN = {1382-4090,1572-9303},
	MRCLASS = {33D05 (33D52 33D60)},
	MRNUMBER = {2195565},
	MRREVIEWER = {Song\ Heng\ Chan},
	DOI = {10.1007/s11139-005-4849-7},
	URL = {https://doi.org/10.1007/s11139-005-4849-7},
}

@article {FW08,
	AUTHOR = {Forrester, Peter J. and Warnaar, S. Ole},
	TITLE = {The importance of the {S}elberg integral},
	JOURNAL = {Bull. Amer. Math. Soc. (N.S.)},
	FJOURNAL = {American Mathematical Society. Bulletin. New Series},
	VOLUME = {45},
	YEAR = {2008},
	NUMBER = {4},
	PAGES = {489--534},
	ISSN = {0273-0979,1088-9485},
	MRCLASS = {33-02 (33-03)},
	MRNUMBER = {2434345},
	MRREVIEWER = {Shaun\ Cooper},
	DOI = {10.1090/S0273-0979-08-01221-4},
	URL = {https://doi.org/10.1090/S0273-0979-08-01221-4},
}

@article {BF97,
	AUTHOR = {Baker, T. H. and Forrester, P. J.},
	TITLE = {The {C}alogero-{S}utherland model and generalized classical
	polynomials},
	JOURNAL = {Comm. Math. Phys.},
	FJOURNAL = {Communications in Mathematical Physics},
	VOLUME = {188},
	YEAR = {1997},
	NUMBER = {1},
	PAGES = {175--216},
	ISSN = {0010-3616,1432-0916},
	MRCLASS = {33C45 (33C50 33C60 81V70)},
	MRNUMBER = {1471336},
	MRREVIEWER = {Laurent\ Habsieger},
	DOI = {10.1007/s002200050161},
	URL = {https://doi.org/10.1007/s002200050161},
}

@incollection {BF99,
	AUTHOR = {Baker, T. H. and Forrester, P. J.},
	TITLE = {Transformation formulas for multivariable basic hypergeometric
	series},
	booktitle = {Methods and Applications of Analysis},
	VOLUME = {6},
	YEAR = {1999},
	NUMBER = {2},
	PAGES = {147--164},
	ISSN = {1073-2772,1945-0001},
	MRCLASS = {33D52 (33D80 39A13)},
	MRNUMBER = {1803887},
	MRREVIEWER = {David\ M.\ Bressoud},
	DOI = {10.4310/MAA.1999.v6.n2.a2},
	URL = {https://doi.org/10.4310/MAA.1999.v6.n2.a2},
}

@article {Herz,
	AUTHOR = {Herz, Carl S.},
	TITLE = {Bessel functions of matrix argument},
	JOURNAL = {Ann. of Math. (2)},
	FJOURNAL = {Annals of Mathematics. Second Series},
	VOLUME = {61},
	YEAR = {1955},
	PAGES = {474--523},
	ISSN = {0003-486X},
	MRCLASS = {33.0X},
	MRNUMBER = {69960},
	MRREVIEWER = {A.\ Erd\'elyi},
	DOI = {10.2307/1969810},
	URL = {https://doi.org/10.2307/1969810},
}

@article {Las98,
	AUTHOR = {Lassalle, Michel},
	TITLE = {Coefficients binomiaux g\'en\'eralis\'es et polyn\^omes de
	{M}acdonald},
	JOURNAL = {J. Funct. Anal.},
	FJOURNAL = {Journal of Functional Analysis},
	VOLUME = {158},
	YEAR = {1998},
	NUMBER = {2},
	PAGES = {289--324},
	ISSN = {0022-1236,1096-0783},
	MRCLASS = {33D67 (05A30 33D45 33D80)},
	MRNUMBER = {1648471},
	MRREVIEWER = {Jiang\ Zeng},
	DOI = {10.1006/jfan.1998.3281},
	URL = {https://doi.org/10.1006/jfan.1998.3281},
}

@article {HHL05,
	AUTHOR = {Haglund, J. and Haiman, M. and Loehr, N.},
	TITLE = {A combinatorial formula for {M}acdonald polynomials},
	JOURNAL = {J. Amer. Math. Soc.},
	FJOURNAL = {Journal of the American Mathematical Society},
	VOLUME = {18},
	YEAR = {2005},
	NUMBER = {3},
	PAGES = {735--761},
	ISSN = {0894-0347,1088-6834},
	MRCLASS = {05E05},
	MRNUMBER = {2138143},
	MRREVIEWER = {Frank\ Sottile},
	DOI = {10.1090/S0894-0347-05-00485-6},
	URL = {https://doi.org/10.1090/S0894-0347-05-00485-6},
}

@book {Mac15,
	AUTHOR = {Macdonald, I. G.},
	TITLE = {Symmetric functions and {H}all polynomials},
	SERIES = {Oxford Classic Texts in the Physical Sciences},
	EDITION = {Second},
	EDITION = {paperback},
	PUBLISHER = {The Clarendon Press, Oxford University Press, New York},
	YEAR = {2015},
	PAGES = {xii+475},
	ISBN = {978-0-19-873912-8},
	MRCLASS = {05E05 (01A75 05-02 20C30 20C33 20K01 33C80 33D80)},
	MRNUMBER = {3443860},
}

@article {Kan96,
	AUTHOR = {Kaneko, Jyoichi},
	TITLE = {{$q$}-{S}elberg integrals and {M}acdonald polynomials},
	JOURNAL = {Ann. Sci. \'Ecole Norm. Sup. (4)},
	FJOURNAL = {Annales Scientifiques de l'\'Ecole Normale Sup\'erieure.
	Quatri\`eme S\'erie},
	VOLUME = {29},
	YEAR = {1996},
	NUMBER = {5},
	PAGES = {583--637},
	ISSN = {0012-9593},
	MRCLASS = {33C80 (05A30 05E05 33D20 33D45)},
	MRNUMBER = {1399617},
	URL = {http://www.numdam.org/item?id=ASENS_1996_4_29_5_583_0},
}

@article {NS14,
	AUTHOR = {Nazarov, M. L. and Sklyanin, E. K.},
	TITLE = {Macdonald operators at infinity},
	JOURNAL = {J. Algebraic Combin.},
	FJOURNAL = {Journal of Algebraic Combinatorics. An International Journal},
	VOLUME = {40},
	YEAR = {2014},
	NUMBER = {1},
	PAGES = {23--44},
	ISSN = {0925-9899,1572-9192},
	MRCLASS = {05E05},
	MRNUMBER = {3226816},
	MRREVIEWER = {Meesue\ Yoo},
	DOI = {10.1007/s10801-013-0477-2},
	URL = {https://doi.org/10.1007/s10801-013-0477-2},
}

@misc{CS24,
	title={Interpolation polynomials, binomial coefficients, and symmetric function inequalities}, 
	author={Chen, Hong and Sahi, Siddhartha},
	year={2024},
	eprint={2403.02490},
	archivePrefix={arXiv},
	primaryClass={math.CO},
	url={https://arxiv.org/abs/2403.02490}, 
}
\end{document}